\documentclass[12pt,epsfig,amsfonts]{amsart} 
\setcounter{tocdepth}{1}
\usepackage{amsmath,amsthm,amssymb,amscd,epsfig}
\usepackage{graphicx}

\setlength{\unitlength}{1cm}

\setlength{\topmargin}{0.13in} \setlength{\textheight}{8.85in}
\setlength{\textwidth}{6.6in} \setlength{\oddsidemargin}{-0.2in}
\setlength{\evensidemargin}{-0.2in} \setlength{\unitlength}{1cm}

%\textwidth=14cm
%%%%%%%%%%%%%%%%%%%%%%%%%%%%%%

\newtheorem*{theorema}{Theorem A}
\newtheorem*{theoremb}{Theorem B}
\newtheorem*{theoremc}{Theorem C}
\newtheorem*{theoremd}{Theorem D}

\newtheorem{lemma}{Lemma}[section]
\newtheorem{sublemma}[lemma]{Sublemma}
\newtheorem{remark}[lemma]{Remark}
\newtheorem{prop}[lemma]{Proposition}
\newtheorem{cor}[lemma]{Corollary}
\newtheorem{definition}[lemma]{Definition}
\newtheorem{claim}[lemma]{Claim}

%%%%%%%%%%%%%%%%%%%%%%%%%%%%%%%

\begin{document}
\author{Hiroki Takahasi}

\address{Department of Mathematics,
Keio University, Yokohama,
223-8522, JAPAN} 
\email{hiroki@math.keio.ac.jp}
\subjclass[2010]{37D25, 37D35, 37E30, 37G25}

\title[Equilibrium measures at temperature zero for H\'enon-like maps] 
{Equilibrium measures at temperature zero\\ for H\'enon-like maps 
at the first bifurcation}
%\date{\today}

\begin{abstract}
We develop a thermodynamic formalism for a strongly dissipative H\'enon-like map at the first bifurcation parameter
at which the uniform hyperbolicity is destroyed by the formation of tangencies inside the limit set.
For any $t\in\mathbb R$
we prove the existence of an invariant Borel probability measure which minimizes
the free energy associated with a non continuous geometric potential $-t\log J^u$, where $J^u$ denotes the Jacobian in the unstable direction.
Under a mild condition, we show that any accumulation point of these measures as $t\to+\infty$ minimizes the unstable Lyapunov exponent.
We also show that
 the equilibrium measures converge as $t\to-\infty$ to a Dirac measure which maximizes the unstable Lyapunov exponent.
\end{abstract}

\maketitle

\section{introduction}
A basic problem in dynamics is to describe how structurally stable 
systems lose their stability
through continuous modifications of the systems.
The loss of stability of horseshoes through 
homoclinic bifurcations is modeled by 
a family of H\'enon-like diffeomorphisms
\begin{equation}\label{henon}
f_a\colon(x,y)\in\mathbb R^2\mapsto(1-ax^2,0)+b\cdot\Phi(a,b,x,y),\quad a\in\mathbb R, \ 0<b\ll1.\end{equation}
Here, $\Phi$ is bounded continuous in $(a,b,x,y)$ and 
$C^2$ in $(a,x,y)$. 
It is known \cite{BedSmi06,CLR08,DevNit79,Tak13} that 
there is a  \emph{first bifurcation parameter}
 $a^*=a^*(b)\in\mathbb R$ 
with the following properties:

%This parameter does not belong to the parameter sets of positive Lebesgue measure constructed in \cite{BenCar91,MorVia93,WanYou01}, and
%satisfy the following

\begin{itemize}

\item $a^*\to2$ as $b\to0$;

\item the non wandering
set of $f_a$ is a uniformly hyperbolic horseshoe for $a>a^*$ ;

\item for $a=a^*$ there is a single orbit of homoclinic or heteroclinic tangency involving (one of) the two fixed saddles.
The tangency is quadratic, and the family $\{f_a\}_{a\in\mathbb R}$ unfolds this tangency generically.

\end{itemize}

The study of the map $f_{a^*}$ opens the door to understanding the dynamics
beyond uniform hyperbolicity in dimension two.
In this paper we advance
the thermodynamic formalism for $f_{a^*}$ initiated in \cite{SenTak1,SenTak2}.
We prove the existence of equilibrium measures for a family $\{\varphi_t\}_{t\in\mathbb R}$ of non continuous geometric potentials, and study 
accumulation points of these measures as $t\to\pm\infty$.

Write $f$ for $f_{a^*}$.
 The non wandering set of $f$, denoted by $\Omega$, is a compact $f$-invariant set. 
Let 
$\mathcal M(f)$ denote the space of $f$-invariant
Borel probability measures endowed with the topology of weak convergence.
For a potential function $\varphi\colon\Omega\to\mathbb R$ the minus of the
free energy $F_{\varphi}\colon\mathcal M(f)\to\mathbb R$ is defined by 
$$F_\varphi(\mu)=h(\mu)+\int\varphi d\mu,$$
where $h(\mu)$ denotes the entropy of $\mu$.
An \emph{equilibrium measure} for the potential $\varphi$ is a measure $\mu_\varphi\in\mathcal M(f)$ which maximizes
$F_\varphi$, i.e.,
$$F_\varphi(\mu_\varphi)=\sup\{F_\varphi(\mu)\colon\mu\in\mathcal M(f)\}.$$
The existence and uniqueness of equilibrium measures depend upon the characteristics of the system and the potential.
The family of potentials we are concerned with is 
$$\varphi_t=-t\log J^u\quad t\in\mathbb R,$$ where $J^u$ denotes the Jacobian in the \emph{unstable direction} defined as follows. 
For a point $x\in \mathbb R^2$ let $E_x^u$ denote the one-dimensional subspace of $T_x\mathbb R^2$ such that
\begin{equation}\label{eu}
\limsup_{n\to\infty}\frac{1}{n}\log\|D_xf^{-n}|E_x^u\|<0.\end{equation}
Since $f^{-1}$ expands area, the one-dimensional subspace of $T_x\mathbb R^2$ with this property is unique 
when it makes sense. 
We call $E^u_x$ the {\it unstable direction at $x$} and define $J^u(x)=\Vert D_xf|E^u_x\Vert$.
It was proved in  \cite[Proposition 4.1]{SenTak1} that $E_x^u$ makes sense for all $x\in\Omega$,
and $x\in\Omega\mapsto E_x^u$ is continuous except at the fixed saddle near $(-1,0)$
where it is merely measurable.

Since the chaotic behavior of $f$ is created by the (non-uniform) 
expansion along the unstable direction,
%a dynamical information on $f$ 
a good deal of information is obtained by studying the equilibrium measures for $\varphi_t$
and the associated \emph{pressure function}
$t\in\mathbb R\mapsto P(t)$, where
$$P(t)=\sup\{F_{\varphi_t}(\mu)\colon
\mu\in\mathcal M(f)\}.$$
The existence of equilibrium measures for $\varphi_t$ 
was proved in \cite{SenTak1} for all $t\leq0$, and for those $t>0$ such that
$P(t)/t$ is slightly bigger than $-\log2$. 
However, the arguments and the result in \cite{SenTak1} do not cover sufficiently large $t>0$.
Our first theorem complements this point.

\begin{theorema}
Assume $f$ preserves orientation. For any $t\in\mathbb R$ there exists an equilibrium measure for $\varphi_t$.
\end{theorema}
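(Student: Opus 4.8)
The plan is to use the variational method and realize an equilibrium measure as a weak accumulation point of a maximizing sequence, the only genuine difficulty being the non-continuity of $\log J^u$ at the fixed saddle $P$ near $(-1,0)$. Fix $t\in\mathbb R$, choose $(\mu_n)\subset\mathcal M(f)$ with $F_{\varphi_t}(\mu_n)\to P(t)$, and pass to a subsequence converging in the weak topology to some $\mu\in\mathcal M(f)$ (possible since $\Omega$ is compact); the goal is to show $F_{\varphi_t}(\mu)=P(t)$. The entropy map $\mu\mapsto h(\mu)$ is upper semicontinuous on $\mathcal M(f)$ (which I would read off from the Markov coding of $\Omega$ built in \cite{SenTak1,SenTak2}), so $\limsup_n h(\mu_n)\le h(\mu)$, and everything comes down to the term $\int\varphi_t\,d\mu_n=-t\int\log J^u\,d\mu_n$. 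Now $\log J^u$ is bounded on $\Omega$ and continuous away from $P$, so $\int\log J^u\,d\mu_n\to\int\log J^u\,d\mu$ will follow unless a definite fraction of the mass of $\mu_n$ concentrates, as $n\to\infty$, near $P$; and then it can only fail if that mass concentrates near the forward branch of the homoclinic orbit, the locus on which $E^u$ is close to the contracting eigendirection $E^s_P$ of $D_Pf$, so that $\log J^u$ there is close to $\log\|D_Pf|E^s_P\|$, strictly below $\log\|D_Pf|E^u_P\|=\log J^u(P)$, while in the weak limit the same mass reappears as an atom at $P$ carrying the larger value. For $t\le0$, and for $t>0$ with $P(t)/t$ close to $-\log2$, this can only decrease $F_{\varphi_t}$ in the limit, which is essentially why those ranges are covered in \cite{SenTak1}; for large $t>0$ it could instead produce $F_{\varphi_t}(\mu_n)>F_{\varphi_t}(\mu)$, so the task is to exclude it there.

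To exclude it I would show that an $f$-invariant measure cannot linger near $P$ along the homoclinic branch with $E^u$ close to $E^s_P$: that branch is a single non-recurrent orbit whose $\omega$-limit is $\{P\}$, whereas the invariant mass of $\mu_n$ near $P$ is carried by orbit segments shadowing $P$, or the homoclinic loop, and along such segments the unstable direction stays in the unstable cone at $P$, hence $\log J^u$ stays close to $\log\|D_Pf|E^u_P\|$. Quantitatively, using the symbolic description of $\Omega$ near the tangency from \cite{SenTak1,SenTak2}, I would decompose $\mu_n=\mu_n'+\mu_n''$ with $\mu_n'$ supported outside a small neighbourhood $U$ of $P$ and $\mu_n''$ supported in $U$, bound $F_{\varphi_t}(\mu_n)$ by the contributions of the two parts — using subadditivity of entropy for the partition $\{U,U^c\}$ (costing only a bounded term) together with Abramov-type bookkeeping for the first return to $U^c$ — and observe that an orbit segment contributing to $\mu_n''$ which spends $m$ consecutive iterates near $P$ accumulates a $\log J^u$-sum comparable to $m\log\|D_Pf|E^u_P\|$ rather than to the smaller $m\log\|D_Pf|E^s_P\|$, while it carries only $o(m)$ entropy. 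Letting $n\to\infty$ then gives $\liminf_n\int\log J^u\,d\mu_n\ge\int\log J^u\,d\mu$, hence $F_{\varphi_t}(\mu)\ge P(t)$, so $\mu$ is an equilibrium measure. The hypothesis that $f$ preserves orientation enters precisely here: it pins down the combinatorial type of the first-bifurcation tangency (a single homoclinic loop at $P$), which is what makes the local picture near $P$ the one used above; in the orientation-reversing case the tangency is heteroclinic and a separate argument is required.

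The main obstacle is exactly this last estimate: showing that along a maximizing sequence the $E^u$-Jacobian near $P$ does not degenerate towards the contracting eigenvalue — equivalently, that the discontinuity of $\log J^u$ at $P$ is invisible to the invariant measures that carry the free energy. This rests on a fine local analysis of the unstable direction at the fixed saddle at the first bifurcation, separating the single homoclinic orbit, on which $E^u$ degenerates, from the recurrent dynamics near $P$, and this is where the bulk of the work goes; the surrounding compactness-and-semicontinuity argument and the decomposition bookkeeping are routine.
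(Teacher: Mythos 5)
First, a notational slip that runs through your whole argument: the fixed saddle near $(-1,0)$, where $E^u$ (and hence $\log J^u$) is discontinuous, is $Q$ in the paper's notation, not $P$ ($P$ is the saddle near $(1/2,0)$). Your entire local analysis should be phrased around $Q$.

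More seriously, the central step of your proposal is false, and this is not a technicality but the crux of the problem. You want to show that for a maximizing sequence $\mu_n\to\mu$ one has $\liminf_n\lambda^u(\mu_n)\ge\lambda^u(\mu)$, i.e., the unstable Lyapunov exponent cannot drop in the weak limit. But it genuinely can, by precisely the mechanism you try to exclude. When a non-wandering orbit enters $I(\delta)$ near the tangency, its unstable direction becomes nearly stable, and after binding to the critical orbit for $p$ iterates the accumulated growth is only $\approx e^{\frac{p}{2}\lambda^u(\delta_Q)}$ (Proposition \ref{binding}(b)) --- the factor $1/2$ coming from the \emph{quadratic} tangency: the admissible tangent vector has horizontal component $\approx|\zeta-x|$ while the distance to $\mathcal F^s(f\zeta)$ is $\approx|\zeta-x|^2$. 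Your claim that orbit segments shadowing $Q$ accumulate $\approx m\log\|D_Qf|E^u_Q\|$ is exactly what fails during the bound period; the correct statement is Proposition \ref{ly}, namely $\liminf_n\lambda^u(\mu_n)\ge\frac{u}{2}\lambda^u(\delta_Q)+(1-u)\lambda^u(\nu)$ with the $1/2$, and this bound is sharp: the periodic orbits in $A_n$ constructed in Sect.~2.5 have $\lambda^u\to\frac12\lambda^u(\delta_Q)$ while the associated measures converge to $\delta_Q$. So $\lambda^u$ does drop, your inequality is wrong, and the argument you sketch cannot be repaired by sharpening the local analysis.

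The paper's route is different and that difference is the whole content of Theorem A. One accepts the $1/2$ loss from Proposition \ref{ly} and compensates for it by a \emph{lower} bound on the pressure: the orientation hypothesis is used to build the rectangles $\hat\omega_n$ and periodic points in $A_n$ with $\lambda^u\le\frac12\lambda^u(\delta_Q)+o(1)$ (Proposition \ref{estimate}, Corollary \ref{estcor}), giving $P(t)\ge -t\lambda_m^u\ge-\frac{t}{2}\lambda^u(\delta_Q)$. Plugging this into the upper-semicontinuity estimate \eqref{equi1} and writing $\mu=u\delta_Q+(1-u)\nu$ cancels the $\delta_Q$-contribution and yields $P(t)\le(1-u)F_{\varphi_t}(\nu)+uP(t)$; for $u<1$ this makes $\nu$ an equilibrium measure, and $u=1$ is excluded by the strict inequality in Lemma \ref{absence} (a Makarov--Smirnov/Leplaideur-type horseshoe construction, again using orientation). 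So orientation preservation is not used to control the combinatorics near the tangency as you suggest; it is used to produce the test measures that give the lower bound on $P(t)$ and to exclude the degenerate case $u=1$. Your compactness/semicontinuity skeleton is fine, but without the $\frac12$ bound and the pressure lower bound, there is no way to close the argument for large $t>0$.
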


For $t$ in a large bounded interval,
the uniqueness of equilibrium measures for $\varphi_t$ was established in \cite{SenTak2}.
It would be nice to prove the uniqueness for all $t\in\mathbb R$, including the orientation reversing case.

Since $t$ represents the inverse of the temperature in statistical mechanics,
$t\to\pm\infty$ means that the temperature goes to zero.
Hence, it is natural to study accumulation points of equilibrium measures for $\varphi_t$ as $t\to\pm\infty$.
They represent the lowest  energy states, and may reflect the characteristics of the system.

The study of the behavior of the equilibrium measures as $t\to\pm\infty$ is also related to the ergodic optimization (See e.g. \cite{BarLepLop} and the references therein):
given a continuous dynamical system $T$ acting on a compact metric space $X$, and 
a real-valued function $\phi$ on $X$, one looks for $T$-invariant Borel probability measures
which maximize the integral of $\phi$. 
One way to do this is by freezing the system: to consider a family $\{t\phi\}_{t\in\mathbb R}$ of potentials 
and an associated family $\{\nu_t\}_{t\in\mathbb R}$ of equilibrium measures, and to let $t\to+\infty$.
If the topological entropy is finite and the potential is continuous, 
then any accumulation point as $t\to+\infty$ maximizes the integral of $\phi$.
For uniformly hyperbolic systems or the subshift of finite type,
the convergence has been established 
for certain locally constant potentials \cite{Bre03,Lep05} as well as
for a residual set of continuous potentials \cite{CLT01,JenMor08}.
However, little is known for non hyperbolic systems.

%The case of sub shift of finite type or uniformly hyperbolic systems  H\"older continuous potentials
%has been studied in the literature \cite{Bre03,CLT01,JenMor08,Lep05}.
%The convergence has been established only for locally constant potentials \cite{Bre03,Lep05}.

%In the context of uniformly hyperbolic systems, the convergence of Gibbs measures
%for a family $\{t\phi\}_{t\in\mathbb R}$ of H\"older continuous potentials has been studied
%in the literature \cite{Bre03,CLT01,JenMor08,Lep05}.
%It is not well-known in which case the convergence as $t\to\infty$ takes place.
%Even for uniformly hyperbolic systems,

%The convergence of equilibrium measures has been extensively studied,
%in the context of uniformly hyperbolic systems with a family $\{t\phi\}_{t\in\mathbb R}$ of H\"older continuous potentials.
%In this case, the measures converge to a measure which maximizes the integral of the potential.

An  {\it unstable Lyapunov exponent} of a measure $\mu\in\mathcal M(f)$ is a number $\lambda^u(\mu)$ defined by 
$$\lambda^u(\mu)=\int\log J^ud\mu.$$
Of interest to us are measures which optimize the unstable Lyapunov exponent.
Since the unstable Lyapunov exponent is not continuous as a function of measures, 
the existence of such measures is an issue.
We show that  any accumulation point 
of the equilibrium measures for $\varphi_t=-t\log J^u$ as $t\to\pm\infty$
optimizes the unstable Lyapunov exponent.

Set
$$\lambda_m^u=\inf\{\lambda^u(\mu)\colon \mu\in\mathcal M(f)\}.$$
A measure $\mu\in\mathcal M(f)$ 
is called {\it Lyapunov minimizing} if $\lambda^u(\mu)=\lambda_m^u$. 
Let $Q$ denote the fixed point of $f$ near $(-1,0)$, and
$\delta_Q$ the Dirac measure at $Q$.

\begin{theoremb}
Assume $f$ preserves orientation. For $t\in\mathbb R$ let $\mu_t$ be 
an ergodic equilibrium measure for $\varphi_{t}$.
Any accumulation point of  $\{\mu_t\}_{t\in\mathbb R}$ as $t\to+\infty$
is $\delta_Q$, or a Lyapunov minimizing measure.
If $(1/2)\lambda^u(\delta_Q)\neq\lambda_m^u$, then any accumulation point of $\{\mu_t\}_{t\in\mathbb R}$ as $t\to+\infty$
is a Lyapunov minimizing measure.
\end{theoremb}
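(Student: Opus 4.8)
The plan is to confront the variational inequality defining the equilibrium measures with the fact that $\log J^u$ is discontinuous exactly at $Q$.

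\emph{Step 1 (freezing estimate).} Since $h(\cdot)$ is bounded above on $\mathcal M(f)$, say $h(\nu)\le C:=\sup_{\nu\in\mathcal M(f)}h(\nu)<\infty$ (the topological entropy of $f|_\Omega$ is finite), the defining inequality $P(t)=F_{\varphi_t}(\mu_t)\ge F_{\varphi_t}(\nu)$ may be rewritten, for $t>0$, as $\lambda^u(\mu_t)\le\lambda^u(\nu)+(h(\mu_t)-h(\nu))/t\le\lambda^u(\nu)+C/t$ for every $\nu\in\mathcal M(f)$. Taking the infimum over $\nu$ and using $\lambda^u(\mu_t)\ge\lambda^u_m$ gives $\lambda^u_m\le\lambda^u(\mu_t)\le\lambda^u_m+C/t$, so $\lambda^u(\mu_t)\to\lambda^u_m$ as $t\to+\infty$. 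In particular, for any sequence $t_k\to+\infty$ with $\mu_{t_k}\to\mu$ one has $\lambda^u(\mu_{t_k})\to\lambda^u_m$.

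\emph{Step 2 (local inputs near $Q$).} It remains to identify $\lim_k\lambda^u(\mu_{t_k})=\lambda^u_m$ with $\lambda^u(\mu)=\int\log J^u\,d\mu$. This would be automatic if $\log J^u$ were continuous; the discontinuity at $Q$ is the only obstruction, and it is precisely why $\delta_Q$ has to appear in the statement. To deal with it I would use the description of $f$ near $Q$ — and of the piece of $\Omega$ accumulating on $Q$ — from \cite{SenTak1}, i.e. the quadratic homoclinic/heteroclinic tangency at the first bifurcation together with the binding/return estimates around it. Three facts are needed. (a) $\lambda^u_m\le\tfrac12\lambda^u(\delta_Q)$, realized by ergodic measures supported near the tangency orbit. (b) A lower semicontinuity of $\lambda^u$ whose defect is carried by the atom at $Q$ and measured by the ``first--tangency rate'' $\tfrac12\lambda^u(\delta_Q)$; in particular, if $\mu_{t_k}\to\delta_Q$ then $\liminf_k\lambda^u(\mu_{t_k})\ge\tfrac12\lambda^u(\delta_Q)$. (c) An accumulation point of ergodic equilibrium measures for $\varphi_t$ as $t\to+\infty$ gives the fixed point $Q$ either zero or full mass. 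I expect (b) and (c) — which is where the ergodicity of the $\mu_{t_k}$ enters, through the control of orbit segments that shadow $Q$ — to be the genuine difficulty; (a) should be comparatively soft.

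\emph{Step 3 (conclusion).} Let $\mu$ be an accumulation point, $\mu_{t_k}\to\mu$. By (c), $\mu(\{Q\})\in\{0,1\}$. If $\mu(\{Q\})=1$, then $\mu=\delta_Q$. If $\mu(\{Q\})=0$, then no mass of $\mu_{t_k}$ concentrates at $Q$ and, $\log J^u$ being bounded and continuous off $Q$, weak convergence gives $\lambda^u(\mu_{t_k})\to\lambda^u(\mu)$; combined with Step 1 this yields $\lambda^u(\mu)=\lambda^u_m$, i.e. $\mu$ is Lyapunov minimizing. This proves the first assertion. For the second, assume $\tfrac12\lambda^u(\delta_Q)\neq\lambda^u_m$. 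Then $\mu=\delta_Q$ is impossible: it would give, by (b), $\lambda^u_m=\lim_k\lambda^u(\mu_{t_k})\ge\tfrac12\lambda^u(\delta_Q)$, hence $\lambda^u_m=\tfrac12\lambda^u(\delta_Q)$ by (a), a contradiction. Therefore $\mu(\{Q\})=0$ and, as above, $\mu$ is Lyapunov minimizing. Thus essentially all the weight of the proof lies in the near-$Q$ analysis of Step 2; Steps 1 and 3 are elementary.
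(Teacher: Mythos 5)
Your Steps~1 and~3 are sound, and Step~1 in particular is exactly the freezing estimate the paper uses implicitly to get $\lim_{n}\lambda^u(\mu_{t_n})=\lambda^u_m$. Items (a) and (b) of Step~2 are also correct and are precisely Corollary~\ref{estcor} and the $u=1$ special case of Proposition~\ref{ly}~\eqref{lyeq1}, respectively.

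The gap is item (c): you assert, without proof, that any accumulation point $\mu$ satisfies $\mu(\{Q\})\in\{0,1\}$. Ergodicity of the $\mu_{t_k}$ does not propagate to the weak limit and does not force a $0$--$1$ dichotomy for the atom at $Q$; indeed the limit may a priori be a nontrivial convex combination $u\delta_Q+(1-u)\nu$ with $0<u<1$, and that is exactly the case your Step~3 cannot handle, because Lemma~\ref{44} no longer applies. The paper does not prove (c). What it uses instead is the \emph{full-strength} version of your (b): Proposition~\ref{ly}~\eqref{lyeq1} gives, for $\mu=u\delta_Q+(1-u)\nu$ with $\nu\{Q\}=0$ and \emph{arbitrary} $u\in[0,1]$,
\[
\lambda^u_m=\lim_{n\to\infty}\lambda^u(\mu_{t_n})\ \ge\ \frac{u}{2}\lambda^u(\delta_Q)+(1-u)\lambda^u(\nu).
\]
Feeding in (a), i.e.\ $\tfrac{u}{2}\lambda^u(\delta_Q)\ge u\lambda^u_m$, yields $(1-u)\lambda^u_m\ge(1-u)\lambda^u(\nu)$, and when $\tfrac12\lambda^u(\delta_Q)>\lambda^u_m$ the chain of inequalities becomes strict for $u>0$, forcing $u=0$ without ever invoking a $0$--$1$ law. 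So the missing idea is not a structural fact about the atom of the limit, but a quantitative lower-semicontinuity-defect estimate valid for all $u\in[0,1]$; your (b) is the $u=1$ endpoint of it and is too weak to close the argument on its own.

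A secondary remark: even with the general Proposition~\ref{ly} the degenerate case $\tfrac12\lambda^u(\delta_Q)=\lambda^u_m$ with $0<u<1$ is delicate (one only learns that $\nu$, not $\mu$, is Lyapunov minimizing), which is another reason one should not expect (c) to be provable in the abstract. The theorem's hypothesis $\tfrac12\lambda^u(\delta_Q)\ne\lambda^u_m$ in its second assertion is there precisely to exclude this degeneracy.
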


Since $\lambda^u(\delta_Q)\to\log4$ and $\lambda^u_m\to\log 2$ as $b\to0$, 
it is not easy to verify $(1/2)\lambda^u(\delta_Q)\neq\lambda^u_m$.
However, from a given family \eqref{henon} of H\'enon-like diffeomorphisms
one can construct another satisfying this condition
by slightly perturbing $\Phi$.

It is worthwhile to compare Theorem B with the results of Leplaideur \cite{Lep11}.
In this paper, he studied an orientation preserving non-uniformly hyperbolic horseshoe map with three symbols, 
with a single orbit of homoclinic tangency, introduced in \cite{Rio01}. 
Although this map is similar to our $f$ at a first glance, its equilibrium measures converge as $t\to+\infty$ to a Dirac measure
which maximizes the unstable Lyapunov exponent. He also proved the nonexistence of a measure which minimizes the unstable Lyapunov exponent.
%The difference between his and our models becomes clear in the limit $t\to+\infty$.

Since there may exist multiple Lyapunov minimizing measures of $f$,
it is important to give a criterion for which one is ``selected" in the limit $t\to+\infty$.
The next theorem establishes a version of the ``entropy criterion" in \cite{BarLepLop}
for uniformly hyperbolic systems or the subshift of finite type
with H\"older continuous potentials.
Let us say that a Lyapunov minimizing measure $\mu\in\mathcal M(f)$ is {\it entropy maximizing}
if 
$$h(\mu)=\sup\{h(\nu)\colon\nu\in\mathcal M(f),\text{$\nu$ is Lyapunov minimizing}\}.$$

\begin{theoremc}
Let $f$ and $\{\mu_t\}_{t\in\mathbb R}$
be the same as in Theorem B.
If $(1/2)\lambda^u(\delta_Q)\neq\lambda_m^u$, then any accumulation point of $\{\mu_t\}_{t\in\mathbb R}$ as $t\to+\infty$
is an entropy maximizing measure.
\end{theoremc}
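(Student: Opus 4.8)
The plan is to run the ``freezing'' argument familiar from uniformly hyperbolic systems, the new point being that the conclusion of Theorem B must be upgraded from ``Lyapunov minimizing'' to ``entropy maximizing'' by means of an upper semicontinuity property of the entropy. So let $\mu_\infty$ be an accumulation point of $\{\mu_t\}_{t\in\mathbb R}$ as $t\to+\infty$, and pick $t_n\to+\infty$ with $\mu_{t_n}\to\mu_\infty$. By Theorem B and the hypothesis $(1/2)\lambda^u(\delta_Q)\neq\lambda_m^u$, the measure $\mu_\infty$ is Lyapunov minimizing, so $\lambda^u(\mu_\infty)=\lambda_m^u$. It then remains to show $h(\mu_\infty)\geq h(\nu)$ for every Lyapunov minimizing $\nu\in\mathcal M(f)$, which is precisely the statement that $\mu_\infty$ attains the supremum in the definition of ``entropy maximizing''.

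The first step is an equilibrium inequality. Let $\nu\in\mathcal M(f)$ be Lyapunov minimizing, so $\lambda^u(\nu)=\lambda_m^u$ and $F_{\varphi_{t_n}}(\nu)=h(\nu)-t_n\lambda_m^u$. Since $\mu_{t_n}$ maximizes $F_{\varphi_{t_n}}$ over $\mathcal M(f)$ and $\varphi_{t_n}=-t_n\log J^u$,
\[
h(\mu_{t_n})-t_n\lambda^u(\mu_{t_n})=F_{\varphi_{t_n}}(\mu_{t_n})\geq F_{\varphi_{t_n}}(\nu)=h(\nu)-t_n\lambda_m^u,
\]
hence $h(\mu_{t_n})-h(\nu)\geq t_n\bigl(\lambda^u(\mu_{t_n})-\lambda_m^u\bigr)$, and the right-hand side is nonnegative because $t_n>0$ and $\lambda^u(\mu_{t_n})\geq\lambda_m^u$. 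Thus $h(\mu_{t_n})\geq h(\nu)$ for every large $n$. To conclude it remains only to pass to the limit, that is, to establish
\[
\limsup_{n\to\infty}h(\mu_{t_n})\leq h(\mu_\infty);
\]
combined with the previous inequality this gives $h(\mu_\infty)\geq h(\nu)$, as required.

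The upper semicontinuity $\limsup_n h(\mu_{t_n})\leq h(\mu_\infty)$ is the main obstacle, and the only place where the non-hyperbolicity of $f$ intervenes: the entropy map is not upper semicontinuous for a general $C^2$ surface diffeomorphism, and the merely measurable dependence of $E_x^u$ at $Q$ is exactly the degeneracy that could spoil it. The plan is to exploit that $\Omega$ is uniformly hyperbolic away from the single tangency orbit, that $x\mapsto E_x^u$ is continuous off $Q$, and that any neighborhood of $Q$ carries only little topological entropy, so as to obtain upper semicontinuity of $\mu\mapsto h(\mu)$ on $\mathcal M(f)$; I expect this property to be available from the framework of \cite{SenTak1}, being also the mechanism that makes $F_{\varphi_t}$ upper semicontinuous for $t\leq0$. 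Concretely I would lift each $\mu_{t_n}$ through the Markov coding and the inducing scheme for $(\Omega,f)$ of \cite{SenTak1}, so that both $h(\mu_{t_n})$ and $\lambda^u(\mu_{t_n})$ are expressed on a symbolic model via Abramov's formula, and reconcile there the weak-$*$ convergence $\mu_{t_n}\to\mu_\infty$ with the behaviour of the entropy; the delicate point is to control, uniformly in $n$, the (necessarily small) amount of mass that $\mu_{t_n}$ places near $Q$ and near the tangency, so that it does not produce entropy which disappears in the limit.
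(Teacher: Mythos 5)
Your argument is correct and uses the same two essential ingredients as the paper: the equilibrium inequality $F_{\varphi_{t_n}}(\mu_{t_n})\geq F_{\varphi_{t_n}}(\nu)$ together with upper semi-continuity of entropy on $\mathcal M(f)$. Your organization is in fact a little cleaner: you observe directly that for any Lyapunov minimizing $\nu$ one has $h(\mu_{t_n})-h(\nu)\geq t_n(\lambda^u(\mu_{t_n})-\lambda_m^u)\geq 0$, hence $h(\mu_{t_n})\geq h(\nu)$ for all $n$, and then pass to the limit via upper semi-continuity. The paper runs the same comparison by contradiction through an auxiliary lemma (its Lemma \ref{claim2}, asserting $P(t_n)<h(\mu)-t_n\lambda^u(\mu)+\varepsilon$ eventually), but the substance is identical. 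One caveat: you treat the upper semi-continuity $\limsup_n h(\mu_{t_n})\leq h(\mu_\infty)$ as the ``main obstacle'' and sketch a speculative proof of it via a symbolic lift and control of mass near $Q$ and the tangency. That effort is unnecessary here: this upper semi-continuity is already established and is simply quoted in the paper as \cite[Corollary 3.2]{SenTak1} (it is also invoked in the proof of Theorem A via \eqref{equi1}). You correctly guessed that it comes from that framework; you just did not need to re-derive it.
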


We now turn to the case $t\to-\infty$. 
The next theorem holds regardless of the orientation of the map $f$.

\begin{theoremd}
 Let $\{\mu_t\}_{t\in\mathbb R}$
be such that $\mu_{t}$
is an ergodic equilibrium measure for $\varphi_{t}$ for all $t\in\mathbb R$.
Then $\mu_t$ converges to $\delta_Q$ as $t\to-\infty$.
\end{theoremd}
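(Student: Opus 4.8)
The plan is to combine one elementary inequality coming from the equilibrium property with the fact that $\delta_Q$ is the unique Lyapunov maximizing measure.

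First I would compare $\mu_t$ with the test measure $\delta_Q$. For $t<0$ put $|t|=-t$, so that $F_{\varphi_t}(\mu)=h(\mu)+|t|\,\lambda^u(\mu)$. The equilibrium property gives $F_{\varphi_t}(\mu_t)\ge F_{\varphi_t}(\delta_Q)=|t|\,\lambda^u(\delta_Q)$, where $h(\delta_Q)=0$ because $Q$ is a fixed point. Dividing by $|t|>0$ and using that $h(\mu_t)$ does not exceed the topological entropy $h_{\mathrm{top}}(f)$ (a finite constant, indeed $\log 2$), one obtains
\[
\lambda^u(\mu_t)\ \ge\ \lambda^u(\delta_Q)-\frac{h_{\mathrm{top}}(f)}{|t|}\,,
\]
so $\lambda^u(\mu_t)\to\lambda^u(\delta_Q)$ as $t\to-\infty$; that is, the equilibrium measures become Lyapunov maximizing in the limit.

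Next I would invoke two structural facts. (a) $\lambda^u(\mu)\le\lambda^u(\delta_Q)$ for every $\mu\in\mathcal M(f)$, with equality only when $\mu=\delta_Q$; equivalently, $\delta_Q$ is the unique Lyapunov maximizing measure. This is where the fine description of $f_{a^*}$ from \cite{SenTak1,SenTak2} is needed: the unstable expansion is largest at $Q$, whereas any invariant measure other than $\delta_Q$ is forced by invariance to give a definite amount of mass to orbit segments that leave a neighborhood of $Q$ and in particular pass near the critical region, where $J^u$ is very small, so that its orbit average of $\log J^u$ drops strictly below $\log J^u(Q)$. (b) $\mu\mapsto\lambda^u(\mu)$ is upper semicontinuous on $\mathcal M(f)$: by \cite[Proposition 4.1]{SenTak1} the function $\log J^u$ is bounded above on the compact set $\Omega$ and continuous off the single point $Q$, and one checks that $\limsup_{x\to Q,\,x\in\Omega}\log J^u(x)\le\log J^u(Q)$; upper semicontinuity of the bounded-above function $\log J^u$ then passes to the integral $\mu\mapsto\int\log J^u\,d\mu$ in the weak topology.

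Granting (a) and (b), the theorem follows by a compactness argument. By (b) the sets $K_c=\{\mu\in\mathcal M(f):\lambda^u(\mu)\ge c\}$ are closed, hence compact, and they decrease as $c\uparrow\lambda^u(\delta_Q)$ with $\bigcap_{c<\lambda^u(\delta_Q)}K_c=\{\delta_Q\}$ by (a). Hence, given any weak neighborhood $V$ of $\delta_Q$, the nested compact sets $K_c\setminus V$ have empty intersection, so $K_{c_0}\subseteq V$ for some $c_0<\lambda^u(\delta_Q)$; by the first paragraph $\lambda^u(\mu_t)\ge c_0$ once $|t|$ is large enough, hence $\mu_t\in V$ for all such $t$. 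Since $\mathcal M(f)$ is metrizable and $V$ was arbitrary, $\mu_t\to\delta_Q$ as $t\to-\infty$. Note that the orientation of $f$ plays no role here, consistent with the statement. The main obstacle is ingredient (a): one must rule out, uniformly, every competing invariant measure whose exponent approaches $\lambda^u(\delta_Q)$, which relies on the detailed geometry near $Q$ and near the homoclinic or heteroclinic tangency from \cite{SenTak1,SenTak2}, in practice via the associated inducing scheme; establishing the $\limsup$ bound in (b) also requires some care because $J^u$ is only measurable at $Q$.
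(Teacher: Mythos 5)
Your overall strategy is the same as the paper's: show $\lambda^u(\mu_t)\to\lambda^u(\delta_Q)$ from the equilibrium inequality (the paper states this as the fact that $\sup_{\mathcal M(f)}\lambda^u=\limsup\lambda^u(\mu_{t_n})$), combine with upper semicontinuity of $\lambda^u$ along ergodic sequences (this is precisely inequality \eqref{lyeq2} of Proposition~\ref{ly}, quoted from \cite[Prop.~4.3]{SenTak1}), and conclude $\mu_t\to\delta_Q$ using that $\delta_Q$ is the unique Lyapunov maximizing measure. Your compactness phrasing of the final step is a fine variant of the paper's ``every accumulation point is $\delta_Q$.''

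The genuine gap is exactly what you flag as the main obstacle: ingredient (a), the uniqueness of $\delta_Q$ as a Lyapunov maximizing measure. This is \emph{not} available as a citation from \cite{SenTak1,SenTak2} (those papers, together with \cite{CLR04}, give existence, not uniqueness), and it is not a soft consequence of ``most invariant measures pass near the critical region.'' It is the content of Lemma~\ref{maximize}, and the paper proves it by a genuine bound-period argument: taking a typical Birkhoff point $x$ for an ergodic $\mu$ with $\mu(I(\delta))>0$, splitting the orbit into bound pieces and free pieces, using the two-sided derivative estimate of Proposition~\ref{binding}(b) (in particular the upper bound $\|D_xf^p|E^u_x\|\le Ce^{\frac{p}{2}\lambda^u(\delta_Q)}$) to gain a factor $e^{-\frac12\lambda^u(\delta_Q)p_k}\le\delta^C<1$ during each bound period, and combining this with the lower bound $p_k\ge -C\log\delta$ to conclude $\lambda^u(\mu)\le\lambda^u(\delta_Q)+C\mu(I(\delta))\log\delta<\lambda^u(\delta_Q)$. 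Your sketch gestures at the right phenomenon, but without this quantitative control the strict inequality does not follow: a priori there could be ergodic measures giving vanishingly small mass to $I(\delta)$ whose exponent approaches $\lambda^u(\delta_Q)$.

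A smaller caveat concerns your justification of (b). You propose to deduce upper semicontinuity of $\mu\mapsto\lambda^u(\mu)$ from pointwise upper semicontinuity of $\log J^u$ at $Q$. But $x\mapsto E^u_x$ is merely measurable at $Q$ (\cite[Prop.~4.1]{SenTak1}), and it is not obvious that $\limsup_{x\to Q}\log J^u(x)\le\log J^u(Q)$; indeed the paper does not attempt such a pointwise statement and instead establishes \eqref{lyeq2} by a measure-theoretic argument restricted to sequences of ergodic measures. Since \eqref{lyeq2} is available and suffices (the $\mu_t$ in Theorem D are ergodic by hypothesis), your proof should simply cite it rather than derive it from a pointwise property of $\log J^u$ that may not hold.
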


It follows from a proof of Theorem D that $\delta_Q$ is the unique measure which maximizes
the unstable Lyapunov exponent (See Lemma \ref{maximize}).
Apart from the uniqueness, the existence of such maximizing measures follows from the result in \cite{CLR04}.

The rest of this paper consists of two sections.
In Sect.2 we develop necessary tools, and 
prove the theorems in Sect.3.
A main ingredient is a control of derivatives in the unstable direction.
To recover from small derivatives near the point of tangency, we develop Benedicks $\&$ Carleson's critical point approach
\cite{BenCar91} further.
The difference from the attractor case \cite{BenCar91} 
is that all but one critical points escape to infinity
under forward iteration. 
This issue has been successfully tackled %worked out 
in \cite{SenTak1}, but
substantial improvements are necessary to treat all $t>0$.
In particular, both lower and upper estimates of derivatives are necessary, as stated in
Proposition \ref{binding}.

In Sect.2.5 we prove a key upper estimate of $\lambda_m^u$ (See Corollary \ref{estcor})
needed for the proofs of Theorems A and B.
Since each critical orbit spends most of its lifespan near the fixed saddle
with a large derivative, the construction of measures with small unstable Lyapunov exponent involves
a control of the position at which reference orbits are released from the effect of the critical orbits.
We show that this is feasible for carefully chosen orbits,
provided the map $f$ preserves orientation.

% in which $f$ is assumed to preserve orientation, 
%finds no its analogue in \cite{SenTak1} or related works such as \cite{Lep11}.

\section{Preliminaries}
In this section we develop necessary tools for the proofs of the theorems.
For the rest of this paper 
we are concerned with the following constants: $\delta$, $b$ chosen in this order,
the purposes of which are as follows:
\begin{itemize}

\item $\delta$ determines the size of a neighborhood of $\zeta_0$ (See Sect.\ref{critical});

\item $b$ determines the magnitude of the reminder term $b\cdot\Phi$ in \eqref{henon}.
\end{itemize}
We shall write $C$ with or without indices to denote any constant which is independent of $\delta$, $b$.

\begin{figure}
\begin{center}
\includegraphics[height=6.5cm,width=14cm]
{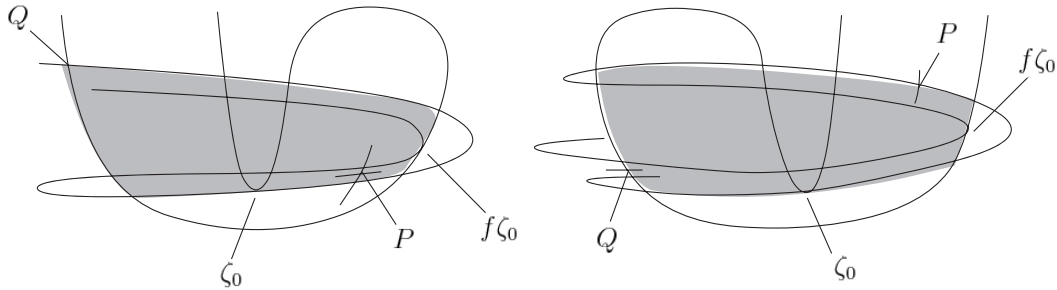}
\caption{Manifold organization for $a=a^*$ in the case of the fold turning down:
orientation preserving/reversing (left/right).
The shaded domains represent the rectangle $R$ 
containing the non wandering set $\Omega$ (see Sect.\ref{family}).}
\end{center}
\end{figure}

\subsection{The non wandering set}\label{family}
The map $f$ has exactly two fixed points, which are saddles.
Let $P$ denote the one near $(1/2,0)$. Recall that $Q$ is the other one near $(-1,0)$.
The orbit of tangency intersects a small neighborhood of the origin $(0,0)$ exactly at one point, denoted by $\zeta_0$ (FIGURE 1).
If $f$ preserves orientation, then $\zeta_0\in W^s(Q)\cap W^u(Q)$.
If $f$ reverses orientation, then $\zeta_0\in W^s(Q)\cap W^u(P)$.

If $f$ preserves orientation, let $W^u=W^u(Q)$. Otherwise,
let $W^u=W^u(P)$.
By a {\it rectangle} we mean any
compact domain bordered by two compact curves in $W^u$ and two in the
stable manifolds of $P$ or $Q$. By an {\it unstable side} of a
rectangle we mean any of the two boundary curves in $W^u$. A {\it
stable side} is defined similarly.

We define a rectangle containing the non wandering set.
Let $$V=\{(x,y)\in \mathbb R^2\colon |x|<2, |y|< \sqrt{b}\}.$$
By the results of \cite{SenTak1} there exists a rectangle $R$ in $V$ with the following properties (See FIGURE 1):

\begin{itemize}
\item[(R1)] $\displaystyle{\Omega=\{x\in R\colon f^nx\in R\ \text{ for every }n\in\mathbb Z\}}$;

\item[(R2)] one of the unstable sides of $R$ contains $\zeta_0$;

\item[(R3)] one of the stable sides of $R$ contains $f\zeta_0$. This side is denoted by $\alpha_0^+$. The other side, denoted by $\alpha_0^-$,  contains $Q$;

\item[(R4)] $f\alpha_0^+\subset\alpha_0^-$.
\end{itemize}

\subsection{Critical points}\label{critical}
Set
$$I(\delta)=\{(x,y)\in V\colon |x|<\delta\}.$$
Observe that $\zeta_0\in I(\delta)$, provided $b$ is small enough.
Although the dynamics  outside of $I(\delta)$ is uniformly hyperbolic, 
returns to the inside of $I(\delta)$ are inevitable and must be treated with care.
A key ingredient is the notion of critical points, i.e.,
points of tangencies between $C^2(b)$-curves in $W^u$ and preimages of leaves
of a stable foliation. 
We quote results from \cite{SenTak1} surrounding critical points, and shapen them further.

From
the hyperbolicity of the saddle $Q$,
there exist two mutually disjoint connected open sets $U^-$, $U^+$ independent of $b$ such that
$\alpha_0^-\subset U^-$, $\alpha_0^+\subset U^+$, $U^+\cap fU^+=\emptyset=U^+\cap fU^-$ and 
a foliation $\mathcal F^s$ of $U=U^-\cup U^+$ by one-dimensional leaves such
that: 
\begin{itemize}
\item[(F1)] $\mathcal F^s(Q)$, the leaf of $\mathcal F^s$ containing $Q$,
contains $\alpha_0^-$; 
\item[(F2)] if $x,fx\in U$, then $f(\mathcal F^s(x))
\subset\mathcal F^s(fx)$;

\item[(F3)] let $e^s(x)$ denote the unit vector in $T_x\mathcal F^s(x)$ whose second component is positive. 
Then $x\mapsto e^s(x)$ is $C^{1}$, $\|D_xfe^s(x)\|\leq Cb$ and $\|D_xe^s(x)\|\leq C$;

\item[(F4)] if $x,fx\in U$, then 
$s(e^s(x))\geq
C/\sqrt{b}$.
\end{itemize}
Here, a {\it slope} $s(v)$ of a nonzero tangent vector $v=\left(\begin{smallmatrix}\xi\\\eta\end{smallmatrix}\right)$ at a point in $\mathbb R^2$ is defined by 
$s(v)=|\eta|/|\xi|$ if $\xi\neq0$, and
$s(v)=\infty$ if $\xi=0$.
%We call $\mathcal F^s$ a {\it stable foliation} on $U$.

\begin{definition}
{\rm We say $\zeta\in W^u\cap I(\delta)$ is a {\it critical point} if $f\zeta\in U^+$ and
$T_{f\zeta}W^u=T_{f\zeta}\mathcal F^s(f\zeta)$.}
\end{definition}

By a \emph{$C^2(b)$-curve} we mean a compact, nearly horizontal $C^2$ curve in $V$ such that the slopes of tangent vectors to it 
are $\leq\sqrt{b}$ and the curvature is everywhere $\leq\sqrt{b}$. 
Let $S$ denote the compact lenticular domain bounded by the parabola
$f^{-1}\alpha_0^+\cap R$ and the unstable side of $R$ not containing $\zeta_0$.
Let us record two properties of the critical points:

\begin{itemize}

\item[(C1)]
any $C^2(b)$-curve in $W^u\cap I(\delta)$ contains at most one
critical point (See e.g. \cite[Remark 2.4]{Tak12});

\item[(C2)] 
any critical point other than $\zeta_0$ is contained in the interior of $S$.
Hence it is mapped to the outside of $R$, and then escape to infinity
under forward iteration.
\end{itemize}

(C2) implies that the critical orbits are contained in a region where 
the uniform hyperbolicity is apparent. Hence, by binding generic orbits which fall inside $I(\delta)$
to suitable critical points, and then copying the exponential growth along the critical orbits,
one shows that the horizontal slopes and the expansion are restored
after suffering from the loss due to the folding behavior near $I(\delta)$.

 \subsection{Binding argument}

Let $\zeta$ be a critical point and $x\in I(\delta)\setminus S$.
We say a unit tangent vector $v$ at $x$
is {\it in admissible position relative to} $\zeta$ if there exists a $C^2(b)$-curve 
which is tangent to both $T_\zeta W^u$ and $v$.

 \begin{prop}\label{binding}
 Let $\zeta$ be a critical point, $x\in (\Omega\cap I(\delta))\setminus S$  and $v$ a unit tangent vector at $x$
  in admissible position relative to $\zeta$.
There exists a positive integer $p=p(\zeta,x)$ such that:
\begin{itemize}

\item[(a)] $f^i\zeta$, $f^ix\in U$ for every $1\leq i\leq p$;

\item[(b)]
$C_1e^{\frac{p}{2}\lambda^u(\delta_Q)}\leq\|D_xf^pv\|\leq C_2e^{\frac{p}{2}\lambda^u(\delta_Q)}$;

\item[(c)]  $s(D_xf^pv)\leq\sqrt{b}$;

\item[(d)]\label{away}
if $\zeta=\zeta_0$, then $0<C_3\leq |f^px-Q|\leq C_4\ll1.$
\end{itemize}
\end{prop}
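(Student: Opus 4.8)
\emph{Plan of proof.} The argument is a quantitative binding construction in the spirit of Benedicks \& Carleson \cite{BenCar91}, comparing the orbit of $x$ with that of the critical point $\zeta$. Propagate through $x$, tangent to $v$, a short $C^2(b)$-curve $\gamma$ that also touches $T_\zeta W^u$ (possible precisely because $v$ is in admissible position relative to $\zeta$), and follow the triple $(f^ix,f^i\zeta,f^i\gamma)$. When $\zeta=\zeta_0$ the reference orbit is very simple: by (R3)--(R4) one has $f\zeta_0\in\alpha_0^+$ and, since $\zeta_0\in W^s(Q)$ and $\alpha_0^-\subset\mathcal F^s(Q)\subset U^-$, the iterates $f^i\zeta_0$ stay in $U$ and converge to $Q$, with $|f^i\zeta_0-Q|$ decaying at the strong contraction rate of $Q$. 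Define $p=p(\zeta,x)$ to be the first iterate at which the shadowing of $f^i\zeta$ by $f^ix$ degrades to a prescribed small scale; concretely, the first $i$ at which the component of $f^ix-f^i\zeta$ transverse to $W^s(Q)$ (resp.\ to the relevant reference curve when $\zeta\neq\zeta_0$) reaches a fixed constant $\ll1$. When $\zeta\neq\zeta_0$, (C2) forces the reference orbit out of $U$ after a bounded number of steps, so $p$ is bounded and the estimates (b)--(d) reduce to the assertion that $\|D_xf^pv\|$ and $|f^px-Q|$ are bounded away from $0$ and $\infty$ by constants, $e^{\frac p2\lambda^u(\delta_Q)}$ then being a harmless constant.

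The derivative estimate (b) is the heart of the matter and explains the factor $\tfrac12$. Since $\zeta$ is a point of quadratic tangency between $W^u$ and a leaf of $\mathcal F^s$, one iterate of the fold near $\zeta$ sends $v$ to a vector with $\|D_xfv\|\asymp d$, where $d:=\mathrm{dist}(x,\zeta)$, while it displaces $fx$ from the reference stable leaf by $\asymp d^2$. Writing $D_xf^iv$ in a splitting into a component along the (nearly horizontal) unstable direction of the reference orbit and a component along $\mathcal F^s$, the stable component is contracted by (F3), and at each step at which both orbits sit near $Q$ the unstable component is multiplied by a factor comparable to $J^u$ evaluated along the critical orbit, hence by $\asymp e^{\lambda^u(\delta_Q)}=J^u(Q)$; the admissible-position hypothesis guarantees that $D_xfv$ already has unstable component comparable to its norm, so no spurious loss occurs. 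By definition of $p$ the transverse displacement $\asymp d^2e^{(i-1)\lambda^u(\delta_Q)}$ first reaches a fixed scale at $i=p$, i.e.\ $e^{p\lambda^u(\delta_Q)}\asymp d^{-2}$; together with $\|D_xf^pv\|\asymp d\cdot e^{(p-1)\lambda^u(\delta_Q)}\asymp d^{-1}$ this gives $\|D_xf^pv\|\asymp e^{\frac p2\lambda^u(\delta_Q)}$, with constants depending only on the hyperbolicity of $Q$. The upper bound — the point not covered in \cite{SenTak1} — additionally requires bounding the fold and the binding length from above, i.e.\ that binding lasts no longer than the above count and $\|D_xf^pv\|$ does not overshoot $d^{-1}$.

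Properties (a), (c), (d) come out along the way. For (a): $f^i\zeta\in U$ for $1\le i\le p$ is exactly the behaviour of the critical orbit just recalled (for $\zeta=\zeta_0$ via (R3)--(R4) and (F1); for $\zeta\neq\zeta_0$ up to the bounded escape time, which we take as $p$), while $f^ix\in U$ because $U$ is open and $f^ix$ stays within the small binding scale of $f^i\zeta$. For (c): one shows inductively, using (F3)--(F4) and the bounded geometry near $Q$, that $f^i\gamma$ stays a $C^2(b)$-curve, and repeated application of $Df$ near $Q$ pulls $D_xf^iv$ into the nearly horizontal unstable cone, so $s(D_xf^pv)\le\sqrt b$. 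For (d): by the termination rule $|f^px-f^p\zeta_0|\asymp$ const $\ll1$, whereas $|f^p\zeta_0-Q|$ is exponentially small, so $0<C_3\le|f^px-Q|\le C_4$, with $C_4$ as small as desired by taking the binding scale small.

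The main obstacle is the quantitative control of $\{f^i\zeta_0\}$ as it slides along $W^s(Q)$ into $Q$: one must separate the weak and strong contraction near $Q$, track the expansion transverse to $W^s(Q)$ with matching lower and upper rates, and show these rates are governed, up to constants uniform in $\delta,b$, by $e^{\lambda^u(\delta_Q)}=J^u(Q)$. Coupled to this is keeping the admissible-position hypothesis effective throughout (so the unstable component of $D_xf^iv$ is never lost) and running the two-sided bookkeeping for $|f^ix-f^i\zeta|$, $\|D_xf^iv\|$ and the $C^2(b)$-geometry of $f^i\gamma$ simultaneously. The case $\zeta\neq\zeta_0$ is easy but must be reconciled with the same statement (b) by the observation that a bounded binding length renders the right-hand side of (b) a constant.
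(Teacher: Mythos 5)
Your overall plan is the right one — this is a Benedicks--Carleson binding argument, and the paper's proof also propagates a nearly horizontal segment from $fx$ to the stable leaf $\mathcal F^s(f\zeta)$, splits $D_xfv = A\cdot\left(\begin{smallmatrix}1\\0\end{smallmatrix}\right) + B\cdot e^s(fx)$, and terminates the bound period when the transverse separation reaches a fixed scale $\approx\tau$. Your heuristic derivation of the exponent $\tfrac12$ in (b) — $A\approx d$, quadratic displacement $\approx d^2$, multiplier $\approx e^{\lambda^u(\delta_Q)}$ near $Q$, so $e^{p\lambda^u(\delta_Q)}\approx d^{-2}$ and $\|D_xf^pv\|\approx d^{-1}\approx e^{\frac p2\lambda^u(\delta_Q)}$ — matches the paper's computation (the paper runs this through the sequence $D_k=\tau\bigl[\sum_{i=1}^k\|w_i\|^2/\|w_{i+1}\|\bigr]^{-1}$ and the bounded distortion results from \cite{MorVia93,SenTak1}, obtaining $\|D_xf^pv\|\approx\tau^{3/2}e^{\frac p2\lambda^u(\delta_Q)}$). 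Your identification of the two-sided estimate in (b) as the new contribution beyond \cite{SenTak1} is also accurate.

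There is, however, one genuine error. You claim that when $\zeta\neq\zeta_0$, property (C2) ``forces the reference orbit out of $U$ after a \emph{bounded} number of steps,'' so that $p$ is bounded and (b)--(d) degenerate to constants. This is false. (C2) only says that such $\zeta$ lies in the interior of $S$, hence $f\zeta\notin R$ and the orbit escapes to infinity \emph{eventually}; it does not say in a bounded number of steps. Critical points $\zeta\neq\zeta_0$ can lie arbitrarily close to the parabola $f^{-1}\alpha_0^+\cap R$ through $\zeta_0$ bounding $S$; for such $\zeta$, $f\zeta$ is close to $\alpha_0^+$, $f^2\zeta$ is close to $\alpha_0^-\subset W^s(Q)$, and $f^i\zeta$ shadows $W^s(Q)$ for an arbitrarily long time before drifting out of $U$. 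Consequently $N:=\sup\{i\ge0:f^i\zeta\in U\}$ is finite but unbounded over critical points, and the bound period $p\le N$ can be as large as one likes even when $\zeta\neq\zeta_0$. The paper's proof treats $\zeta=\zeta_0$ and $\zeta\neq\zeta_0$ uniformly: it shows that a valid $p\in[1,N]$ with $D_p<\mathrm{length}(\gamma)\le D_{p-1}$ exists (using $x\in\Omega\Rightarrow f^{N+1}x\in R$ to rule out $\mathrm{length}(\gamma)\le D_N$), and then runs the same derivative and slope estimates for all $\zeta$. Only (d) is special to $\zeta_0$, precisely because for $\zeta\neq\zeta_0$ the orbit $f^p\zeta$ escapes $R$ to the left of $\alpha_0^-$, so a lower bound on $|f^px-f^p\zeta|$ does not translate to a lower bound on $|f^px-Q|$ — see the Remark in the paper immediately after the Proposition. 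You should delete the ``bounded escape time'' shortcut and instead prove (a)--(c) uniformly in $\zeta$, restricting only the geometric conclusion (d) to $\zeta=\zeta_0$ via $\mathcal F^s(f\zeta_0)\supset\alpha_0^+$.

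Two smaller imprecisions worth flagging: (i) you write $\|D_xfv\|\asymp d$, but in fact $\|D_xfv\|$ is of order $1$; it is the \emph{horizontal component} $A$ that is $\asymp d$, while the $e^s$-component can be of order $1$ and only later dies off by $(Cb)^{p-1}$ via (F3). (ii) The admissible-position hypothesis does not make the unstable component of $D_xfv$ comparable to its norm; it merely guarantees $A\gtrsim|\zeta-x|$, which prevents $A$ from being anomalously small. These do not affect the final estimates but should be stated correctly.
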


\begin{proof}
Let $\tau>0$ be sufficiently small so that
$\{y\in\mathbb R^2\colon\min\{|y-z|\colon z\in\alpha_0^-\cup\alpha_0^+\}\leq \sqrt{\tau}\}
\subset U.$
For $i\geq1$ write
$w_i=D_{f\zeta}f^{i-1}(\begin{smallmatrix}1\\0\end{smallmatrix}).$ 
For $k\geq1$
define
 $$D_k=\tau\left[\sum_{i=1}^k\frac{\|w_{i}\|^2}{\|w_{i+1}\|}\right]^{-1}.$$
Write $\mathcal F^s(f\zeta)=\{(F(y),y)\colon y\in J\}$, where
 $J$ is an interval containing $[-\sqrt{b},\sqrt{b}]$.
Write $fx=(x_0,y_0)$, and
let $\gamma$ denote the segment connecting $fx$ and $(F(y_0),y_0)$.
 Set $N=\sup\{i\geq0\colon f^i\zeta\in U\}$.
 We claim 
 there exists a unique integer $p\in[1,N]$ such that
 \begin{equation}\label{keq}D_p<{\rm length}(\gamma)\leq D_{p-1}.\end{equation}
Since $\zeta\neq x$ and $D_p\to0$ as $p\to\infty$, this is obvious if $N=\infty$. In the case $N<\infty$, assume
$ |x_0-F(y_0)|\leq  D_{N}$.
Then $|f^{N+1}\zeta-f^{N+1}x|\leq C\tau$.
From the assumption $x\in\Omega$ and (R1), $f^{N+1}x\in R$.
The definition of $N$ gives $f^{N+1}\zeta\notin U$.
Hence
$|f^{N+1}\zeta-f^{N+1}x|\geq C\sqrt{\tau}$
and we obtain a contradiction for sufficiently small $\tau$.
So the claim holds.

For $A,B>0$ we write $A\approx B$ if both $A/B$ and $B/A$ are bounded from above by constants independent of $\tau$, $\delta$, $b$.

\begin{lemma}\label{lema}
For every $k\leq N$, 
\begin{itemize}
\item[(a)] $D_k\approx \tau e^{-\lambda^u(\delta_Q)k}$;
\item[(b)] $\|w_k\|D_k\approx \tau$.
\end{itemize}
\end{lemma}
\begin{proof}
By the bounded distortion results in \cite[Section 6]{MorVia93} and \cite[Lemma 2.6(a)]{SenTak1}, 
$ \|w_{i}\|\approx e^{\lambda^u(\delta_Q)(i-1)}$ holds for every $1\leq i\leq k+1.$
Hence
$$D_k^{-1}= \frac{1}{\tau}\sum_{i=1}^k\frac{\|w_{i}\|^2}{\|w_{i+1}\|}\approx\frac{1}{\tau}\sum_{i=1}^k\|w_{k}\|e^{\lambda^u(\delta_Q)(i-k)}\approx \frac{1}{\tau} e^{\lambda^u(\delta_Q)k},$$
and so (a) holds.
(b) is contained in \cite[Lemma 2.4]{SenTak1}.
\end{proof}
If $1\leq i\leq p-1$,
then by \cite[Lemma 2.6]{SenTak1}, 
$\|D_zf ^{i}\left(\begin{smallmatrix}1\\0\end{smallmatrix}\right)\|\approx
\|w_{i+1}\|$ holds for all $z\in\gamma$, and $f^{i}\gamma$ is a $C^2(b)$-curve.
Lemma \ref{lema}(b) gives
\begin{equation*}
 {\rm length}(f^{i}\gamma)\approx{\rm length}(\gamma)\|w_{i+1}\|\leq
CD_{p-1}\|w_{i+1}\|\leq CD_{p-1}\|w_{p}\| \leq C\tau.\end{equation*}  
This
implies $x,fx,\ldots,f^{p}x\in U$, and so (a).

Split
$$D_{x}fv=A\cdot \left(\begin{smallmatrix}1\\0\end{smallmatrix}\right)+
B\cdot e^s(fx),\ \ A,B\in\mathbb R.$$
Since $v$ is in admissible position relative to $\zeta$, from the results in \cite{SenTak1,Tak11}
we have $A\approx |\zeta-x|$ and ${\rm length}(\gamma)\approx |\zeta-x|^2$.
On the other hand, \eqref{keq} and Lemma \ref{lema}(a) give
${\rm length}(\gamma)\approx D_p\approx \tau e^{-\lambda^u(\delta_Q)p}.$
Hence
\begin{equation*}
|\zeta-x|\approx  \frac{1}{\sqrt{\tau}}e^{-\frac{p}{2}\lambda^u(\delta_Q)}.
\end{equation*}
By Lemma \ref{lema}(b),
\begin{equation}\label{keq-2}
{\rm length}(f^{p-1}\gamma)\approx D_{p}\|w_{p}\| \approx \tau.\end{equation}  
Putting these estimates together we obtain
\begin{align*}
 |A|\cdot\|D_{fx}f^{p-1}\left(\begin{smallmatrix}1\\0\end{smallmatrix}\right)\|&\approx|\zeta-x
|\cdot\|w_p\|\approx|\zeta-x|\cdot\frac{{\rm length}(f^{p-1}\gamma)}{{\rm length}(\gamma)}\\
&\approx{\rm length}(f^{p-1}\gamma)\cdot |\zeta-x|^{-1}\approx \tau^{\frac{3}{2}}e^{\frac{p}{2}\lambda^u(\delta_Q)}.\end{align*}
For the other component in the splitting, (F3) gives 
$$|B|\cdot\|D_{fx}f^{p-1}e^s(fx)\|\leq (Cb)^{p-1}.$$
Then
$\|D_xf^pv\|\approx \tau^{\frac{3}{2}}e^{\frac{p}{2}\lambda^u(\delta_Q)}$, and so (b) holds.
It also follows that $s(D_xf^{p-1}v)\ll1$,
 and so $s(D_xf^{p}v)\leq\sqrt{b}$, and (c).
(d) follows from   \eqref{keq-2} and
$\mathcal F^s(f\zeta_0)\supset\alpha_0^+$,
which is a consequence of (F1) (F2) and (R4).
 \end{proof}

\begin{remark}
{\rm 
The existence of a uniform lower bound on $|f^px-f^p\zeta|$ can be read out from the above proof. However, this does not imply a
uniform lower bound on $|f^px-Q|$ as in 
Proposition \ref{binding}(d), because if $\zeta\neq\zeta_0$ then
$f^p\zeta$ escapes from $R$ to the left of $\alpha_0^-$.}
\end{remark}

 The integer $p$ and $\zeta$ in Proposition \ref{binding} are called a {\it bound period},
 and {\it a binding critical point} of $x$ respectively.
The next lemma allows us to find a binding critical point for any non wandering point which 
falls inside $I(\delta)$.
For $x\in \Omega$ let $e^u(x)$ denote any unit tangent vector which spans $E_x^u$.
\begin{lemma}\cite[Lemma 2.9]{SenTak1}\label{capture}
For any $x\in\Omega\cap I(\delta)\setminus\{\zeta_0\}$ there exists a critical point relative to which $e^u(x)$ is in admissible position. 
\end{lemma}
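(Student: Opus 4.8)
The plan is to produce $\zeta$ by a limiting procedure: express $E^u_x$ as a limit of tangent directions to short $C^2(b)$-curves lying in (or accumulating on) $W^u$ near $x$, attach to each approximant a critical point relative to which that direction is in admissible position, and pass to the limit. Concretely, since $x\mapsto E^u_x$ is continuous on $\Omega\setminus\{Q\}$ and $x\in I(\delta)$ forces $x\neq Q$, and since (by the structure of $\Omega$) points of $W^u\cap I(\delta)$ accumulate on $x$, one can choose $y_j\to x$ with $y_j\in W^u\cap I(\delta)$ and $T_{y_j}W^u\to E^u_x$; equivalently one may track the normalized images $D_{f^{-n}x}f^n\binom{1}{0}$ along the backward orbit of $x$, which converge in direction to $E^u_x$ by the cone estimates (uniform hyperbolicity outside $I(\delta)$, and Proposition \ref{binding} restoring a cone after each deep return). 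Either way the task reduces to: for each approximant, exhibit a critical point $\zeta_j$ relative to which the approximating direction is in admissible position, \emph{with $|\zeta_j-y_j|$ and the $C^2(b)$-norm of the witnessing curve controlled uniformly in $j$}.

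The heart is the production of these critical points, and this is where the Benedicks $\&$ Carleson critical-point machinery of \cite{SenTak1} (following \cite{BenCar91}) enters. Critical points arise as limits of tangency points between suitably truncated forward iterates $f^n(\gamma)$ of short $C^2(b)$-curves $\gamma\subset W^u$ and the $f^{-n}$-preimages of leaves of $\mathcal F^s$; the folding of $W^u$ near $I(\delta)$ forces such tangencies to occur, and in the position needed for admissibility relative to $y_j$ (this is exactly Proposition \ref{binding} read from the point of view of the releasing point). Bounded distortion along the reference orbits --- cf.\ \cite[Section 6]{MorVia93} and \cite[Lemma 2.6]{SenTak1}, as already used in the proof of Proposition \ref{binding} --- yields $C^2(b)$-bounds with constants independent of the generation $n$, which is what makes the tangency points Cauchy at a generation-independent rate and hence convergent to a genuine critical point. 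This must be run simultaneously with the inductive verification that binding recovers expansion, since the distortion control presupposes the binding estimates; the circularity is resolved by a joint induction on the generation, with careful bookkeeping of which critical point is ``seen'' by the most recent deep return of the backward orbit of $y_j$.

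Finally I would let $j\to\infty$. Using (C1)--(C2) and the uniform $C^2(b)$-bounds, a subsequence of $\{\zeta_j\}$ converges to some $\zeta\in W^u\cap\overline{I(\delta)}$; the conditions $f\zeta\in U^+$ and $T_{f\zeta}W^u=T_{f\zeta}\mathcal F^s(f\zeta)$ are closed, so $\zeta$ is again a critical point, the hypothesis $x\neq\zeta_0$ being used to rule out a degenerate limit on $\partial I(\delta)$ or at $\zeta_0$. Admissible position is likewise stable under the limit: the $C^2(b)$-curves witnessing admissibility of $T_{y_j}W^u$ relative to $\zeta_j$ subconverge, by Arzel\`a--Ascoli with the uniform $C^2$-bounds, to a $C^2(b)$-curve tangent to $T_\zeta W^u$ at $\zeta$ and to $e^u(x)$ at $x$, so $e^u(x)$ is in admissible position relative to $\zeta$. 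The main obstacle is the second step: extracting the critical point together with \emph{generation-independent} $C^2(b)$-control, because this is entangled with the inductive proof that binding recovers expansion, and the bookkeeping of which critical point is assigned to each deep return must be arranged so that the induction closes.
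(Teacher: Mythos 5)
The paper gives no proof of this lemma; it is quoted verbatim from \cite[Lemma 2.9]{SenTak1}, so there is nothing in this source against which to compare your sketch directly. Judged on its own terms, your outline captures the right BC-style ideas, but there is one imprecision in the first step and a genuine concern in the last.

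The imprecision: writing ``points of $W^u\cap I(\delta)$ accumulate on $x$ $\ldots$ with $T_{y_j}W^u\to E^u_x$'' glosses over the fact that $W^u$ is heavily folded inside $I(\delta)$, so the sheets of $W^u$ Euclidean-nearest to $x$ need not have tangent direction close to $E^u_x$. The correct device, quoted in the proof of Proposition \ref{estimate} from \cite[Sect.\,2.4 \& Lemma 2.8]{SenTak1}, selects $C^2(b)$-curves $\gamma_j\subset W^u$ that shadow the backward orbit of $x$; that is the alternative track you offer via $D_{f^{-n}x}f^n\binom{1}{0}$ and cone estimates, and it --- not mere accumulation --- is what makes the tangent directions converge. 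Your first phrasing would not.

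The concern: if the critical points $\zeta_j$ live on pieces $\gamma_j\subset W^u$ of unboundedly growing generation, an Arzel\`a--Ascoli limit $\gamma_\infty$ of these curves lies in $\overline{W^u}$ but has no a priori reason to lie in $W^u$ itself, whereas the definition in Sect.\,\ref{critical} requires a critical point to be a point of $W^u\cap I(\delta)$. Closedness of the tangency condition alone does not place the limit on the unstable manifold, so ``the conditions are closed, hence $\zeta$ is again a critical point'' does not close the argument. The cleaner route, and the one Sublemma \ref{angleint} is built for, avoids the limit entirely: fix one sufficiently large $j$, take the critical point $\zeta_j$ carried by the fully crossing $C^2(b)$-curve $\gamma_j$, note that $\gamma_j$ itself witnesses admissibility of $T_{x_j}\gamma_j$ relative to $\zeta_j$, and then absorb the small displacement from $x_j$ to $x$ and the small angle change from $T_{x_j}\gamma_j$ to $e^u(x)$ by a $C^2(b)$-perturbation of the witnessing curve. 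This eliminates any need to control a limit of critical points across generations and is what lets your proposed ``joint induction'' close.
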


 \subsection{Unstable Lyapunov exponents of limit measures}\label{unslim} 
 The next proposition, which is a substantial improvement of  \cite[Proposition 4.3]{SenTak1},
  gives a lower estimate of the amount of drop of the unstable Lyapunov
 exponent in the weak limit of measures.
  Let $\mathcal M^e(f)$ denote the set of elements of $\mathcal M(f)$ which are ergodic.
  \begin{prop}\label{ly}
 Let $\{\mu_n\}_n$ be a sequence in $\mathcal M^e(f)$
 such that $\mu_n\to\mu$, $\mu=u\delta_Q+(1-u)\nu$, $0\leq u\leq 1$, $\nu\in\mathcal M(f)$
 and $\nu\{Q\}=0$. Then:
 \begin{equation}\label{lyeq1}
\frac{u}{2}\lambda^u(\delta_Q)+(1-u)\lambda^u(\nu)\leq  \liminf_{n\to\infty}\lambda^u(\mu_n);
 \end{equation}
 \begin{equation}\label{lyeq2}
\limsup_{n\to\infty}\lambda^u(\mu_n)\leq\lambda^u(\mu).
 \end{equation}
 \end{prop}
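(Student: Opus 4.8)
The plan is to analyze, for each ergodic $\mu_n$ with large unstable Lyapunov exponent, the way orbits distribute their time between the region near the fixed saddle $Q$ and the rest of $\Omega$, and to track how the ``bound period'' machinery of Proposition \ref{binding} forces a quantitative loss of a factor of $1/2$ on the portion of orbit that is bound to the critical point $\zeta_0$. First I would fix a small $\delta$ and note that $\log J^u$ is continuous and bounded on $\Omega\setminus I(\delta)$, while near $Q$ the value $\log J^u$ is close to $\lambda^u(\delta_Q)$ up to an error controlled by $\delta$. The key is that whenever a typical orbit point enters $I(\delta)\setminus S$, Lemma \ref{capture} supplies a binding critical point, and on the subsequent bound period $p$ the estimate (b) of Proposition \ref{binding} says the total growth of an admissible vector over those $p$ steps is $\approx e^{(p/2)\lambda^u(\delta_Q)}$ rather than $\approx e^{p\lambda^u(\delta_Q)}$; meanwhile items (a) and (d) guarantee that during the bound period the orbit stays in $U$ near the saddle, i.e.\ it is ``using up'' time that, measured by $\log J^u$, would naively contribute $\approx p\lambda^u(\delta_Q)$ but in fact contributes only $\approx (p/2)\lambda^u(\delta_Q)$ to the genuine expansion $\tfrac1n\log\|D_xf^n e^u(x)\|$. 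Using the Birkhoff ergodic theorem for $\mu_n$ (valid since $\mu_n$ is ergodic) applied to $\log J^u$, one gets $\lambda^u(\mu_n)=\lim_n \tfrac1n\log\|D_xf^ne^u(x)\|$ for $\mu_n$-a.e.\ $x$; the point is then to lower-bound this by separating the orbit segments inside bound periods (governed by $\zeta_0$) from the hyperbolic part.

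Concretely, for \eqref{lyeq1} I would proceed as follows. Choose a small neighborhood $B_\varepsilon(Q)$ of $Q$. For $\mu_n$-a.e.\ $x$, partition $\{0,\dots,n-1\}$ into: (i) indices $i$ at which $f^ix$ lies in a bound period generated by a return to $I(\delta)$ with binding critical point $\zeta_0$; (ii) the remaining indices. On the type-(ii) indices, $\log J^u(f^ix)$ is essentially the honest contribution and for those $i$ with $f^ix\notin B_\varepsilon(Q)$ the function $\log J^u$ is bounded; the frequency of visits to $B_\varepsilon(Q)$ is $\mu_n(B_\varepsilon(Q))$, which converges to $u$ (up to an $O(\varepsilon)$ error) by the portmanteau theorem since $\mu_n\to\mu=u\delta_Q+(1-u)\nu$ and $\nu\{Q\}=0$. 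On the type-(i) indices, grouped into maximal bound blocks of lengths $p_1,p_2,\dots$, the expansion accumulated is $\approx \sum_j (p_j/2)\lambda^u(\delta_Q)$ by Proposition \ref{binding}(b), i.e.\ a factor $1/2$ compared to the measure-theoretic weight $\sum_j p_j \lambda^u(\delta_Q)$; and the total relative length $\tfrac1n\sum_j p_j$ of these blocks again converges, as $n\to\infty$ and then $\delta,\varepsilon\to0$, to the mass $u$ that $\mu$ places at $Q$ — because an orbit spends a long bound period near $Q$ exactly when it has just made a deep return, and in the limit this is what produces the atom at $Q$. Passing to the liminf and letting the auxiliary parameters tend to $0$ yields $\liminf_n\lambda^u(\mu_n)\ge \tfrac u2\lambda^u(\delta_Q)+(1-u)\int\log J^u\,d\nu = \tfrac u2\lambda^u(\delta_Q)+(1-u)\lambda^u(\nu)$.

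For the upper bound \eqref{lyeq2} the argument is softer: $\log J^u$ is bounded above on all of $\Omega$ and is upper semicontinuous off the single discontinuity point (the fixed saddle near $(-1,0)$, which is $Q$), and near $Q$ one has $\log J^u \le \lambda^u(\delta_Q)+C\delta$ so the only place mass can ``concentrate'' to raise the integral is at $Q$, where it is accounted for by $\mu$. Hence $\limsup_n\int\log J^u\,d\mu_n\le\int\log J^u\,d\mu$ by a standard semicontinuity-of-integration argument (write $\log J^u$ as an infimum of continuous functions away from $Q$ and handle a shrinking neighborhood of $Q$ separately), which is exactly \eqref{lyeq2}. I expect the main obstacle to be the bookkeeping in \eqref{lyeq1}: one must show rigorously that the $\mu_n$-frequency of indices lying inside bound periods with binding point $\zeta_0$ converges to precisely $u$ (no more, no less) as $n\to\infty$ and the auxiliary constants shrink — this requires combining the fact that during a bound period the orbit is trapped in $U$ near $Q$ (Proposition \ref{binding}(a),(d)) with an argument that, conversely, a $\mu_n$-generic long sojourn near $Q$ can be attributed to a bound period, so that the atom $u\delta_Q$ of the limit measure is neither overcounted nor missed. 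Handling the possibility of returns bound to critical points other than $\zeta_0$ is not an issue, since by (C2) those binding points escape $R$, so such returns are rare for measures in $\mathcal M(f)$ and contribute negligibly.
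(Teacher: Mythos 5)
Your plan for \eqref{lyeq1} captures the correct mechanism --- the factor $1/2$ comes from the bound-period estimate of Proposition \ref{binding}(b), and the time spent in bound periods should account for the atom of $\mu$ at $Q$ --- but the paper implements it very differently, and the difference matters. Rather than partitioning orbit time into bound-period blocks and tracking their frequency, the paper introduces a fixed nested family of regions $V_{c,k}$, built from a thin rectangle $\tilde V_k$ adjacent to $\alpha_0^-$ and its forward images, with an auxiliary parameter $c\in(0,1/2)$. The crucial advantage is that $\liminf_n \mu_n(V_{c,k})\geq u$ then follows from plain weak convergence and $\nu\{Q\}=0$ (Claim \ref{claim3}). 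The analogue of Proposition \ref{binding}(b) in this framework is Lemma \ref{drop}: a return to $I(\delta)$ two steps before entering $\tilde V_k$, followed by a sojourn of length $n$ in $V_{c,k}$, forces $\|D_xf^n|E^u_x\|\geq e^{c_0\lambda^u(\delta_Q)n}$ for any $c_0<c$. That lemma is not a restatement of Proposition \ref{binding}(b): it must treat the case $p-2>n$ separately, where the return is so deep that the orbit exits $V_{c,k}$ while still inside its bound period. There the growth over the first $n$ iterates has not yet recovered from the small derivative incurred near $\zeta_0$, and the estimate instead combines the exit condition on ${\rm length}(f^{n+1}\gamma)$, the depth bound $|\zeta-y|\leq Ce^{-p\lambda^u(\delta_Q)/2}$ from Lemma \ref{lema}(a), and the constant $X(c)$.

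The gap you yourself flag --- showing the $\mu_n$-frequency of bound-period iterates converges precisely to $u$ --- is indeed a genuine one, and it is not clear it can be closed inside your framework: the set of orbit indices lying in a bound period is defined dynamically and has no simple topological description, so its $\mu_n$-measure is not controlled by portmanteau-type weak convergence arguments. The paper's $V_{c,k}$-construction exists exactly to trade this dynamical bookkeeping for a topological one, at the cost of having to prove the harder Lemma \ref{drop} in place of simply invoking Proposition \ref{binding}(b). For \eqref{lyeq2} the paper just cites \cite[Prop.~4.3]{SenTak1}; your semicontinuity sketch is plausible but needs a real argument that $\log J^u$ is upper semicontinuous at $Q$, which does not follow from continuity elsewhere, since the paper records that $E^u$ is only measurable at that point.
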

 \begin{proof}
  \eqref{lyeq2} was proved in the proof of \cite[Proposition 4.3]{SenTak1}.
Here we prove \eqref{lyeq1}.

 \begin{lemma}{\rm (\cite[Lemma 4.4]{SenTak1})}\label{44}
Let $\{\mu_n\}_n$ be a sequence in $\mathcal M^e(f)$ such that $\mu_n\to\mu$ and $\mu\{Q\}=0$.
Then $\lambda^u(\mu_n)\to\lambda^u(\mu)$.
\end{lemma} 
In the case $u=0$,  \eqref{lyeq1} is a consequence of Lemma \ref{44}.
We now consider the case $u\neq0$.
We begin by introducing a sequence $\{\tilde\alpha_k\}_{k=0}^\infty$ of compact curves in $W^s(P)\cap R$ 
 which allow us  to relate the proximity of an orbit's return to the boundary of the region $S$ with the time 
 it will subsequently spend near $Q$.
 Define $\tilde\alpha_0$ to be the connected component of $W^s(P)\cap R$ containing $P$,
 Given $\tilde\alpha_{k-1}$, define $\tilde\alpha_k$ to be one of the two components of $f^{-1}\tilde\alpha_{k-1}\cap R$ which is at the left of $\zeta_0$
 (See FIGURE 2). 

\begin{figure}
\begin{center}
\includegraphics[height=4cm,width=10cm]{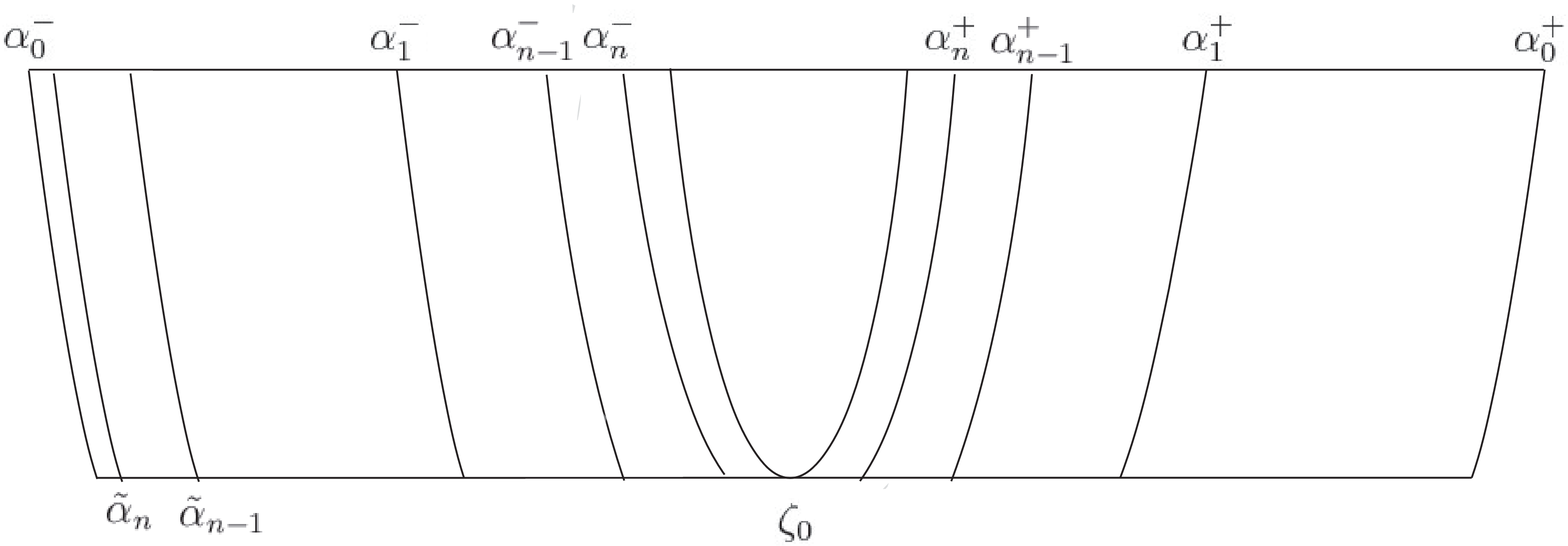}
\caption{The curves $\{\tilde\alpha_n\}$, $\{\alpha_n^{+}\}$, $\{\alpha_n^{-}\}$.
The $\{\tilde\alpha_n\}$ accumulate on the left stable side of $R$. Both $\{\alpha_n^{+}\}$ and $\{\alpha_n^{-}\}$ accumulate on the parabola $f^{-1}\alpha_0^+\cap R$
containing the point of tangency $\zeta_0$ near the origin.}
\end{center}
\end{figure}

Let $c\in(0,1/2)$ and define
$$X(c)=\frac{\lambda^u(\delta_Q)}{ 5}\left(\frac{1}{2}-c\right)\in(0,1).$$
Let $\tilde V_{k}$ denote the rectangle bordered by $\tilde\alpha_{k}$, $\alpha_0^-$ 
 and the unstable sides of $R$.
Define
$$V_{c,k}=\bigcup_{i=0}^{[1-X(c)]k}f^i\tilde V_{k},$$
where $[$ $\cdot$ $]$ denotes the integer part.
Observe that 
 $\{V_{c,k}\}_k$ is decreasing in $k$. 
By the Inclination Lemma, the Hausdorff distance between $\tilde\alpha_k$ and $\alpha_0^-$ converges to $0$ as $k\to\infty$.
This implies
  $\bigcap_{k=1}^\infty V_{c,k}=\alpha_0^-$.

\begin{lemma}\label{drop}
If $0<c_0<c<1/2$, then 
there exists 
$k_0\geq1$ such that if $k\geq k_0$ and $x\in\Omega$, $n\geq1$ are such that
$f^{-2}x\in I(\delta)$, $x\in \tilde V_{k}$, $x,fx,\ldots,f^{n-1}x\in V_{c,k}$ and $f^nx\notin V_{c,k}$, then
$$\|D_xf^n|E^u_x\|\geq e^{c_0\lambda^u(\delta_Q)n}.$$
\end{lemma}

\begin{proof}
%We have $x\notin f^i\tilde V_{k}$ for every $0<i\leq k+1$,
%for otherwise 
%$f^{-1}x\in f^i\tilde V_{k}$ for some $0\leq i\leq k$ and so
%$f^{-1}x\in V_{k}$, a contradiction.
%Since $x\in V_{k}$ we have $x\in \tilde V_{k}$.
%Hence

Write $y$ for $f^{-2}x$. By Lemma \ref{capture} there exists a critical point
$\zeta$ relative to which $e^u(y)$ is in admissible position.
Let  $p=p(\zeta,y)$ denote the corresponding bound period.
We treat two cases separately.
\medskip

\noindent{\it Case 1: $p-2\leq n$.}
Proposition \ref{binding} gives
$\|D_{y}f^pe^u(y)\|\geq Ce^{\frac{\lambda^u(\delta_Q)}{2}p}$ and
$s(e^u(f^{p}y))\leq\sqrt{b}$. Since $f^{p}y,f^{p+1}y,\ldots,f^{n+1}y$ are located near $Q$, 
the bounded distortion gives
$\|D_{f^{p}y}f^{n+2-p}e^u(f^{p}y)\|\geq Ce^{\lambda^u(\delta_Q)(n+2-p)}.$
Then
\begin{align*}\|D_xf^ne^u(x)\|&=\frac{\|D_{y}f^{n+2}e^u(y)\|}
{\|D_{y}f^2e^u(y))\|}>\|D_{y}f^{n+2}e^u(y)\|\\
&=\|D_{f^{p}y}f^{n+2-p}e^u(f^{p}y)\|\cdot\|D_{y}f^pe^u(y)\|\\
&\geq C e^{\frac{\lambda^u(\delta_Q)}{2}(n+2)}\geq
e^{c_0\lambda^u(\delta_Q)n},\end{align*}
where the last inequality holds for sufficiently large $k$ because of 
$n> k$.

\medskip

\noindent{\it Case 2: $p-2> n$.}
Fix a $C^2(b)$-curve $\gamma$ which connects $f^{-1}x$ and $\alpha_0^+$.
The curves $f^i\gamma$ $(i=1,\ldots,n+1)$ are $C^2(b)$-curves
located near $Q$. 
The condition $f^{n-1}x\in V_{k}$ and $f^{n}x\notin V_{k}$ implies
\begin{equation}\label{Vkeq-1}
{\rm length}(f^{n+1}\gamma)>5^{-X(c)k}.\end{equation}

The bounded distortion gives
$\|D_zf ^{n+1}\left(\begin{smallmatrix}1\\0\end{smallmatrix}\right)\|\approx
\|D_{f\zeta}f ^{n+1}\left(\begin{smallmatrix}1\\0\end{smallmatrix}\right)\|$ for all $z\in\gamma$.
Using this and ${\rm length}(\gamma)\leq C|\zeta-y|^2$
we have
\begin{equation}\label{Vkeq-2}
{\rm length}(f^{n+1}\gamma)\approx{\rm length}(\gamma)\|D_{f\zeta}f ^{n+1}\left(\begin{smallmatrix}1\\0\end{smallmatrix}\right)\|\leq
C |\zeta-y|^2\|D_{f\zeta}f ^{n+1}\left(\begin{smallmatrix}1\\0\end{smallmatrix}\right)\|.\end{equation}  
Proposition \ref{lema}(a) gives
\begin{equation}\label{Vkeq-3}
|\zeta-y|\leq Ce^{-\frac{p}{2}\lambda^u(\delta_Q)}.
\end{equation}

 Split
$$D_{y}fe^u(y)=A\cdot \left(\begin{smallmatrix}1\\0\end{smallmatrix}\right)+
B\cdot e^s(fy),\ \
A, B\in\mathbb R.$$
Using \eqref{Vkeq-1} \eqref{Vkeq-2} \eqref{Vkeq-3} and $k<p$, for some $C>0$ we have
\begin{align*}
|A|\cdot\|D_{fy}f^{n+1}\left(\begin{smallmatrix}1\\0\end{smallmatrix}\right)\|&\approx|\zeta-y
|\cdot\|D_{f\zeta}f ^{n+1}\left(\begin{smallmatrix}1\\0\end{smallmatrix}\right)\|\\
&\geq C\cdot {\rm length}(f^{n+1}\gamma)|\zeta-y|^{-1}\geq C\cdot 5^{-X(c)p}e^{\frac{p}{2}\lambda^u(\delta_Q)}\\
&\geq C\cdot 2^{X(c)p}e^{c\lambda^u(\delta_Q)p}\geq
e^{c\lambda^u(\delta_Q)p},\end{align*}
where the last inequality holds provided $k$ is sufficiently large so that $C\cdot 2^{X(c)k}\geq1$.

For the other component in the splitting we have
$$|B|\cdot\|D_{fy}f^{n+1}e^s(fy)\|\leq(Cb)^{n+1}\leq (Cb)^p.$$
We have 
\begin{align*}\|D_xf^ne^u(x)\|&>\|D_{y}f^{n+2}e^u(y)\|\geq e^{c\lambda^u(\delta_Q)p}-(Cb)^p\geq  e^{
c_0\lambda^u(\delta_Q)p}>e^{c_0\lambda^u(\delta_Q)n}.
\end{align*}
where the second last inequality holds sufficiently large $k$
because of $c>c_0$ and $p>n+2>k$.
\end{proof}

Let us return to the proof of \eqref{lyeq1} in the case $u\neq0$.
Taking a subsequence if necessary, we may assume $\{\lambda^u(\mu_n)\}_n $
converges. 
Let
 $0<c_0<c<1/2$.
Fix a partition of unity $\{\rho_{0,c,k},\rho_{1,c,k}\}$ on $R$ such that
$${\rm supp}(\rho_{0,c,k})=\overline{\{x\in R\colon \rho_{0,c,k}(x)\neq0\}}\subset V_{c,k}\ \text{ and }
\ {\rm supp}(\rho_{1,c,k})\subset R\setminus V_{c,2k}.$$
\begin{claim}\label{claim3}
$\displaystyle{\liminf_{n\to\infty}}\mu_n(V_{c,k})\geq u$.
\end{claim}
\begin{proof}
If $u\neq1$, then $\mu_n-u\delta_Q\to(1-u)\nu$.
Since $\nu\{Q\}=0$, $\nu(\partial V_{c,k})=0$. This yields
$(\mu_n-u\delta_Q)(V_{c,k})\to(1-u)\nu(V_{c,k})$ as $n\to\infty$,
namely $\mu_n(V_{c,k})\to u+(1-u)\nu(V_{c,k})$.
The same convergence obviously takes place in the case $u=1$.
Hence the claim holds.
\end{proof}

From the Ergodic Theorem, there exists $\xi_{n}\in\Omega$ such that
$$\lim_{m\to\infty}\frac{1}{m}\#
\{0\leq i\leq m-1\colon f^i\xi_n\in V_{c,k}\}=\mu_n(V_{c,k}).$$ The forward orbit of $\xi_n$ is a concatenation of segments in $V_{c,k}$ and those out of $V_{c,k}$.
Lemma \ref{drop} gives
$$\int\rho_{0,c,k}\log J^ud\mu_n=
\lim_{m\to\infty}\frac{1}{m}\sum_{i=0}^{m-1} (\rho_{0,c,k}\log
J^u)\circ f^i(\xi_n)\geq \mu_n(V_{c,k})c_0\lambda^u(\delta_Q).$$
If $u\neq 1$, then the weak convergence for the sequence $\{(1-u)^{-1}(\mu_n-u\delta_Q)\}_n$ in $\mathcal M(f)$ implies
$$\lim_{n\to\infty}\int\rho_{1,c,k}\log J^ud\mu_n=(1-u)\int\rho_{1,c,k}\log J^ud\nu.$$
The same inequality remains to hold for the case $u=1$.
Hence we have
\begin{align*}\lim_{n\to\infty}\lambda^u(\mu_n)&=
\lim_{n\to\infty}\int\rho_{0,c,k}\log
J^ud\mu_n+\lim_{n\to\infty}\int\rho_{1,c,k}\log J^ud\mu_n\\
& \geq uc_0\lambda^u(\delta_Q)+(1-u)\int\rho_{1,c,k}\log J^ud\nu.
\end{align*}
Since $\nu\{Q\}=0$, $\rho_{1,c,k}\log J^u\to \log J^u$ $\nu$-a.e. as $k\to\infty$. Letting $k\to\infty$ and then using the Dominated Convergence Theorem gives
\begin{align*}\lim_{n\to\infty}\lambda^u(\mu_n) \geq uc_0\lambda^u(\delta_Q)+(1-u)\lambda^u(\nu).
\end{align*}
Since $c,c_0$ are arbitrary such that $0<c_0<c<1/2$,
the desired inequality holds.
\end{proof}

\subsection{Construction of measures with small unstable Lyapunov exponents}\label{lyapatom}

In this subsection we construct a sequence of atomic measures with small unstable Lyapunov exponent. In this and the next subsections we assume $f$ preserves orientation.

We continue using the sequence $\{\tilde\alpha_n\}_{n=0}^\infty$ of compact 
curves in $W^s(P)\cap R$ in the proof of Proposition \ref{ly}. 
 For each $n\geq0$ let $\alpha_n'$ denote the connected 
component of $f^{-1}\tilde\alpha_n\cap R$ 
which is not $\tilde\alpha_{n+1}$. 
The set $f^{-1}\alpha_n'\cap R$ consists of two curves, one at the left
of $\zeta_0$ and the other at the right. They are denoted by $\alpha_{n+1}^-$,
$\alpha_{n+1}^+$ respectively (See FIGURE 2).
By definition, these curves obey the following diagram
\begin{equation}\label{diagram}
\{\alpha_{n+1}^-,\alpha_{n+1}^+\}\stackrel{f^2}{\to}\tilde\alpha_n
\stackrel{f}{\to}\tilde\alpha_{n-1}
\stackrel{f}{\to}\tilde\alpha_{n-2}\stackrel{f}{\to}\cdots
\stackrel{f}{\to}\tilde\alpha_1=\alpha_1^-\stackrel{f}{\to}
\tilde\alpha_0=\alpha_1^+.\end{equation}
Observe that $\tilde\alpha_0=\alpha_1^+$ and $\tilde\alpha_1=\alpha_1^-$.

Let $\omega^+_n$ (resp. $\omega^-_n$) denote the rectangle bordered by $\alpha_n^+$, $\alpha_{n+1}^+$ (resp.
$\alpha_n^-$, $\alpha_{n+1}^-$)
and the unstable sides of $R$. 
The following holds:

\begin{itemize}

\item $f\omega_n^\pm$ is at the right of $\alpha_1^+$. If $n\geq 2$, then
$f^i\omega_n^\pm$ is at the left of $\alpha_1^-$ for every $2\leq i\leq n$;

\item One of the stable sides
of $f^{n+1}\omega_n^\pm$ 
 is contained in $\alpha_1^-$ and the other in $\alpha_1^+$.
The unstable sides of $f^{n+1}\omega_n^\pm$ 
are $C^2(b)$-curves connecting $\alpha_1^-$ and $\alpha_1^+$ \cite{SenTak2}.

\end{itemize}

For each $n\geq4$
define $$\hat\omega_n=\omega_{n-3}^+\cap f^{-n+2}\omega_1^-.$$
Since $f$ preserves orientation, $f^{-2}\zeta_0$ is contained in the unstable sides of $\omega_1^-$ (See FIGURE 3).

\begin{figure}
\begin{center}
\includegraphics[height=5cm,width=11cm]{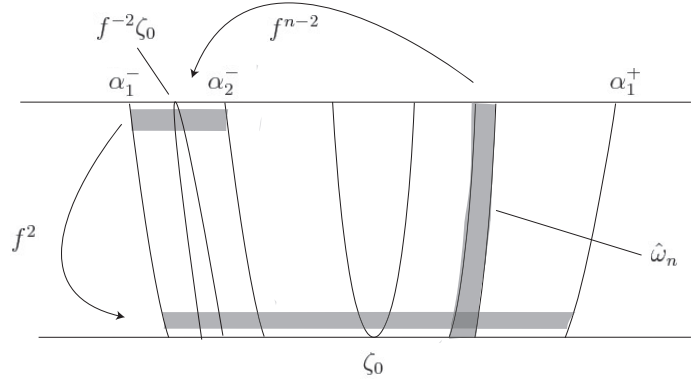}
\caption{The rectangles $\hat\omega_n$, $f^{n-2}\hat\omega_n$, $f^{n}\hat\omega_n$.}
\end{center}
\end{figure}

Let $\gamma^u(\zeta_0)$ denote the $C^2(b)$-curve in $W^u$ which contains $\zeta_0$, and connects $\alpha_1^-$ and $\alpha_1^+$.
Define $$A_n=
 \{x\in\hat\omega_n\cap\Omega\colon \min\{|x-y|\colon y\in\gamma^u(\zeta_0)\}\leq (Cb)^{\frac{n}{2}}\}.$$
 If $x\in A_n$ then $|x-\zeta_0|> 5^{-n}$, for otherwise $f^{n}x$ were
close to $f^{n}\zeta_0\in\alpha_0^-$, a contradiction.

\begin{prop}\label{estimate}
Assume $f$ preserves orientation. For sufficiently large $n$ and all $x\in A_n$,
$$\|D_{x}f^{n}|E_{x}^u\|\leq Ce^{\frac{n}{2}\lambda^u(\delta_Q)}.$$
\end{prop}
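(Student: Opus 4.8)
The plan is to track the curve $\gamma^u(\zeta_0)$ (or the portion of $W^u$ through $x\in A_n$) forward under $f^n$ and compare with the critical orbit $\{f^i\zeta_0\}$, exactly as in the upper estimate of Proposition~\ref{binding}(b), but now with the quantitative return depth built into the definition of $A_n$. First I would observe that for $x\in A_n=\{x\in\hat\omega_n\cap\Omega:\operatorname{dist}(x,\gamma^u(\zeta_0))\le (Cb)^{n/2}\}$, the point $x$ lies in $\omega_{n-3}^+\cap f^{-n+2}\omega_1^-$; from the combinatorics encoded in the diagram \eqref{diagram} and the itinerary properties of $\omega_n^\pm$ listed just before, the orbit $x,fx,\dots,f^{n-2}x$ stays in $U$ near $Q$ (after the first couple of steps through $I(\delta)$), and $f^{n-2}x\in\omega_1^-$. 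The heart of the matter is the lower bound $|x-\zeta_0|>5^{-n}$ already noted in the text, together with its counterpart: the membership in $\hat\omega_n$ forces $x$ to return to the $n$-th generation rectangle, so the bound period of $x$ relative to $\zeta_0$ is essentially $n$ up to an additive constant, and in particular $|x-\zeta_0|\approx \tau^{-1/2}e^{-\frac{p}{2}\lambda^u(\delta_Q)}$ with $p=n+O(1)$.

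Next I would carry out the derivative splitting from the proof of Proposition~\ref{binding}. Writing $e^u(x)$ and pushing a $C^2(b)$-curve $\gamma$ through $x$ tangent to $T_{\zeta_0}W^u$, I split $D_xfe^u(x)=A\cdot(\begin{smallmatrix}1\\0\end{smallmatrix})+B\cdot e^s(fx)$. For the hyperbolic (stable-foliation) component, (F3) gives $|B|\cdot\|D_{fx}f^{n-1}e^s(fx)\|\le (Cb)^{n-1}$, which is negligible. For the dominant component, bounded distortion (\cite[Lemma 2.6]{SenTak1}, \cite[Section 6]{MorVia93}) gives $\|D_{fx}f^{n-1}(\begin{smallmatrix}1\\0\end{smallmatrix})\|\approx\|w_n\|\approx e^{\lambda^u(\delta_Q)(n-1)}$, while $|A|\approx|\zeta_0-x|$ by admissibility. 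Multiplying, $\|D_xf^n e^u(x)\|\approx |\zeta_0-x|\cdot e^{\lambda^u(\delta_Q)(n-1)}$. The crucial input is the \emph{upper} bound $|\zeta_0-x|\le C\,(Cb)^{?}$... no — rather, I need an upper bound on $|\zeta_0-x|$ that, multiplied by $e^{\lambda^u(\delta_Q)(n-1)}$, yields $e^{\frac{n}{2}\lambda^u(\delta_Q)}$; this is precisely $|\zeta_0-x|\le C e^{-\frac{n}{2}\lambda^u(\delta_Q)}$, which follows because $x\in f^{-n+2}\omega_1^-$ means the bound period of $x$ relative to $\zeta_0$ is at least $n-O(1)$, hence by Lemma~\ref{lema}(a) and the relation $\operatorname{length}(\gamma)\approx|\zeta_0-x|^2\approx D_p\approx\tau e^{-\lambda^u(\delta_Q)p}$ with $p\ge n-O(1)$.

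I expect the main obstacle to be establishing rigorously that membership in $\hat\omega_n$ really does force the bound period of $x$ relative to $\zeta_0$ to be at least $n$ up to a uniform additive constant — i.e., turning the combinatorial/geometric statement ``$x\in f^{-n+2}\omega_1^-$ and $x$ is $(Cb)^{n/2}$-close to $W^u$'' into the analytic inequality $|\zeta_0-x|\le C e^{-\frac{n}{2}\lambda^u(\delta_Q)}$. This requires knowing that the return $f^{n-2}x\in\omega_1^-$ is a ``shallow'' return (bounded distance from the stable sides of $\omega_1^-$, equivalently from $f^{-2}\zeta_0$), so that unfolding it backwards through the $\approx n$ near-$Q$ iterates and then through the folding map near $\zeta_0$ contracts the discrepancy $|\zeta_0-x|$ at exactly the rate $e^{-\frac{n}{2}\lambda^u(\delta_Q)}$; the transversality of $W^u$ to the $\tilde\alpha_k$ and the uniform hyperbolicity near $Q$ and outside $I(\delta)$ supply the needed distortion control. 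Once this depth estimate is in hand, the derivative bound is a routine concatenation of the binding-type estimate with bounded distortion near $Q$, as above.
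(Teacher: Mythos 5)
Your overall strategy matches the paper's: treat $x\in A_n$ as bound to $\zeta_0$, argue the bound period $p=p(\zeta_0,x)$ is $n$ up to an additive constant, and convert that into the derivative bound. But two steps that the paper treats carefully are left as gaps in your write-up, and one of them is where the actual content lies.

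First, you invoke ``admissibility'' of $e^u(x)$ relative to $\zeta_0$ without establishing it. The set $A_n$ is defined only by the \emph{distance} of $x$ to $\gamma^u(\zeta_0)$; admissibility requires in addition that the \emph{direction} $E^u_x$ be close to the tangent of $\gamma^u(\zeta_0)$ (in the sense that both are tangent to a common $C^2(b)$-curve). Passing from $C^0$-proximity to $C^1$-proximity of the curves is exactly what Sublemma~\ref{angleint} accomplishes: two disjoint $C^2(b)$-graphs that are $L^2$-close at a point have slopes that are $\sqrt{L}$-close there. Combined with the approximation of $E^u_x$ by tangents of $C^2(b)$-curves in $W^u$ (from \cite[Sect.2.4, Lemma 2.8]{SenTak1}), this gives the required angle control $\angle(E^u_x,T_z\gamma^u(\zeta_0))\le Cb^{n/8}$. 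Without this step, the splitting $D_xfe^u(x)=A(\begin{smallmatrix}1\\0\end{smallmatrix})+Be^s(fx)$ with $|A|\approx|\zeta_0-x|$, which you use, is not justified.

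Second, and more seriously, you flag the inequality $p\ge n-O(1)$ as ``the main obstacle'' and leave it unproved, hoping it follows somehow from $x\in f^{-n+2}\omega_1^-$ by unfolding the return. The paper gets it for free from Proposition~\ref{binding}(d): since the binding critical point is $\zeta_0$, one has $0<C_3\le|f^px-Q|\le C_4\ll1$. The two-sided bound, independent of $x$, is precisely the ``shallow return'' you say you need: after time $p$ the point $f^px$ sits at a definite, bounded distance from $\alpha_0^-\ni Q$, and the exponential drift near $Q$ together with the itinerary of $\hat\omega_n$ (which forces the orbit to remain to the left of $\alpha_1^-$ until about time $n-3$ and to land in $\omega_1^-$ at time $n-2$) then forces $n-p\le C$. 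Once $n-p\le C$ is in hand, the paper does not re-derive the derivative bound via the $A/B$ splitting as you do (which would require bounded distortion for $n-1$ steps, i.e. $p\ge n$, not just $p\ge n-O(1)$); it simply applies the upper bound of Proposition~\ref{binding}(b) over the $p$ bound-period iterates, and the crude bound $\|Df\|\le5$ over the remaining $n-p\le C$ iterates:
\[
\|D_xf^n|E^u_x\|=\|D_xf^p|E^u_x\|\cdot\|D_{f^px}f^{n-p}|E^u_{f^px}\|\le Ce^{\frac{p}{2}\lambda^u(\delta_Q)}\cdot 5^{\,n-p}\le Ce^{\frac{n}{2}\lambda^u(\delta_Q)}.
\]
This sidesteps the bounded-distortion issue entirely. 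So your plan is in the right spirit, but the two missing ingredients (the angle control via Sublemma~\ref{angleint}, and the lower bound on $p$ via Proposition~\ref{binding}(d)) are exactly what the paper supplies, and without them the argument does not close.
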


\begin{proof}
Let $n$ be large enough so that $A_n\subset I(\delta)$.
Let $x\in A_n$.
We show that 
$e^u(x)$ is in admissible position relative to $\zeta_0$.
Since $\gamma^u(\zeta_0)$ is $C^2(b)$ and 
 $|x-\zeta_0|> 5^{-n}$, this would hold if
  $|x-z|\leq Cb^{\frac{n}{2}}$ and 
$\angle(E_x^u,T_z\gamma^u(\zeta_0))\leq Cb^{\frac{n}{8}}$,
where $z$ denote the point on $\gamma^u(\zeta_0)$ whose first coordinate coincides with that of $x$.
The first inequality immediately follows from the definition of $A_n$.
The second one follows from the sublemma below, combined with the following fact
from \cite[Sect.2.4 $\&$ Lemma 2.8]{SenTak1}: 
there are a sequence $\{x_n\}_n$ in $\Omega$ and a sequence $\{\gamma_n\}_n$ of $C^2(b)$-curves in $W^u$
connecting $\alpha_1^-$ and $\alpha_1^+$ 
such that $x_n\in\gamma_n$ and
$T_{x_n}\gamma_n\to E_x^u$ as $n\to\infty$.

\begin{sublemma}\label{angleint}
Let $L\in(0,1/4)$, and let
$\gamma_i=\{(x,\gamma_i(x))\in\mathbb R^2\colon x\in [-L,L]\}$, 
$i=1,2$ be 
two disjoint $C^2(b)$-curves.
Assume $|\gamma_1(0)-\gamma_2(0)|\leq L^2$.
Then $|\gamma_1'(0)-\gamma_2'(0)|\leq\sqrt{L}.$
\end{sublemma}
\begin{proof}
Without loss of generality we may assume $\gamma_2$ lies above $\gamma_1$, 
and $\gamma_1'(0)>\gamma_2'(0)$.
Set $A(x)=\gamma_2(x)-\gamma_1(x)$. By the assumption, $A(x)>0$ for all $x\in [-L,L]$ 
 and $A'(0)<0$.
By the $C^2(b)$-property, $|A''(x)|\leq 2\sqrt{b}$.
If $A'(0)<-\sqrt{L}$, then by the Mean Value Theorem,
$A'(x)\leq A'(0)+2\sqrt{b}x\leq-\sqrt{L}/2,$
and thus
$A(L)=A(0)+\int_{0}^LA'(x)dx\leq L^2-L^{3/2}/2<0,$
a contradiction.
\end{proof}

Let $p=p(\zeta_0,x)$ denote the bound period.
Proposition \ref{binding}(d) implies
$n-p\leq C$, and therefore
$$\|D_{x}f^{n}|E_{x}^u\|=\|D_{x}f^{p}|E_{x}^u\|\cdot\|D_{f^{p}x}f^{n-p}|E_{f^{p}x}^u\|
\leq Ce^{\frac{p}{2}\lambda^u(\delta_Q)}\cdot 5^{n-p}\leq Ce^{\frac{n}{2}\lambda^u(\delta_Q)}.\qedhere$$
\end{proof}

% From the results in \cite{SenTak1, SenTak2} we have
% $$A_n\cap\Omega=
 %\bigcup\{A_n\cap\Omega\cap\gamma^u\colon\gamma^u\in\Gamma^u\}.$$
% Hence, there exists a point $z\in\gamma^u(y)$ such that $\pi_1(z)=\pi_1(x_n)$.
 %By \cite[Lemma 2.2]{SenTak1},
 %$|\pi_2(z)-\pi_2(x_n)|\leq  Cb^{\frac{n}{2}}$.

% \footnote{The proof of \cite[Proposition 3.2(P4)]{SenTak2} 
 %requires the following condition: there exists a constant $C>0$ such that for all $a$ near $2$
%and small $b$,
%$\|D\log|\det Df_{a}|\|\leq C.$ At this point we do not need the full strength of \cite[Proposition 3.2(P4)]{SenTak2} 
%and only need to obtain a uniform distortion constant.
%To this end, the above condition is not necessary.} 

%Hence, it suffices to consider the case where $x,y\inA_n$ are contained in the same stable side of
%$A_n\cap f^{n+3}A_n$. 
%Lemma \ref{claim1} and the condition
%$f^iA_n\notin I(\delta)$ for every $1\leq i\leq n+2$ together imply
%$\|D_xf^{n+3}|E^u_x\|\approx |x-\zeta_0|\cdot\|D_{fy}f^{n+2}\left(\begin{smallmatrix}1\\0\end{smallmatrix}\right)\|$,
%and the same for $y$. Hence 
%$$\frac{\|D_xf^{n+3}|E^u_x\|}{\|D_yf^{n+3}|E^u_y\|}\approx \frac{|x-\zeta_0|}{|y-\zeta_0|}\cdot \frac{\|D_{fx}f^{n+2}\left(\begin{smallmatrix}1\\0\end{smallmatrix}\right)\|}
%{\|D_{fy}f^{n+2}\left(\begin{smallmatrix}1\\0\end{smallmatrix}\right)\|}.$$
%The first term of the right-hand-side is $<2$.
%The second term is uniformly bounded due to the bounded distortion for orbits outside of $I(\delta)$.

\begin{cor}\label{estcor}
If $f$ preserves orientation,
then $(1/2)\lambda^u(\delta_Q)\geq\lambda_m^u.$
\end{cor}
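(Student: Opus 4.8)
The plan is to deduce the Corollary from Proposition \ref{estimate} by exhibiting, for each large $n$, an honest element $\mu_n\in\mathcal M(f)$ with $\lambda^u(\mu_n)$ only slightly larger than $\frac12\lambda^u(\delta_Q)$, and then letting $n\to\infty$; the $\mu_n$ will be orbit measures of periodic points lying in $A_n$. First I would record that the unstable direction is $Df$-invariant along $\Omega$: if $v$ spans $E^u_x$, then $D_xf(v)$ satisfies the defining relation \eqref{eu} at $fx$, so by uniqueness $D_xf(E^u_x)=E^u_{fx}$, and this is legitimate at every point of $\Omega$, in particular at every periodic point, by \cite[Proposition 4.1]{SenTak1}. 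Consequently the one-dimensional factors $J^u(f^ix)=\|D_{f^ix}f|E^u_{f^ix}\|$ multiply along an orbit, so if $p_n$ is periodic with period dividing $n$ and $\mu_n=\frac1n\sum_{i=0}^{n-1}\delta_{f^ip_n}\in\mathcal M(f)$, then
$$\lambda^u(\mu_n)=\int\log J^u\,d\mu_n=\frac1n\sum_{i=0}^{n-1}\log J^u(f^ip_n)=\frac1n\log\|D_{p_n}f^{n}|E^u_{p_n}\|.$$
If moreover $p_n\in A_n$, Proposition \ref{estimate} bounds the right-hand side by $\frac{\log C}{n}+\frac12\lambda^u(\delta_Q)$, and since $\mu_n\in\mathcal M(f)$ this already gives $\lambda^u_m\le\lambda^u(\mu_n)\le\frac{\log C}{n}+\frac12\lambda^u(\delta_Q)$; letting $n\to\infty$ yields $\lambda^u_m\le\frac12\lambda^u(\delta_Q)$. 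It is essential that each $\mu_n$ is used directly as a test measure and that no weak-$*$ limit is extracted: one expects $\mu_n\to\delta_Q$, because the orbit of $p_n$ spends a bound period of length $\approx n$ near $Q$, while $\mu\mapsto\lambda^u(\mu)$ is not lower semicontinuous at $\delta_Q$ — indeed $\lambda^u(\mu_n)\to\frac12\lambda^u(\delta_Q)<\lambda^u(\delta_Q)$ — so passing to the limit would destroy the bound. (Should a subsequential limit of $\{\mu_n\}$ happen to differ from $\delta_Q$, one could instead conclude through \eqref{lyeq1} of Proposition \ref{ly}, but this detour is not needed.)

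The substantive step, and the only place the orientation hypothesis is used here, is the construction of a periodic point inside $A_n$. I would argue in two stages. First, using the rectangle calculus of Sect.\ref{lyapatom} and \cite{SenTak2}, I would show that $f^{n}$ maps the thin rectangle $\hat\omega_n$ across itself in the unstable direction: by construction $f^{n-2}\hat\omega_n\subset\omega_1^-$, and since both $f^{n-2}\omega_{n-3}^+$ and $f^{2}\omega_1^-$ are rectangles whose stable sides lie in $\alpha_1^-$ and $\alpha_1^+$, the image $f^{n}\hat\omega_n$ stretches completely across the region between $\alpha_1^-$ and $\alpha_1^+$, hence across $\hat\omega_n$; along this itinerary $Df^{n}$ expands vectors in admissible position by a factor $\approx e^{\frac n2\lambda^u(\delta_Q)}\gg1$ and contracts the stable cone by at most $(Cb)^{n}$ (Propositions \ref{binding} and \ref{estimate}). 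A standard graph-transform/cone argument then produces a unique fixed point $p_n$ of $f^{n}$ in $\hat\omega_n$, which lies in $\Omega$. Second — and this is the delicate point I expect to be the main obstacle — one must check $p_n\in A_n$, i.e.\ that $p_n$ lies within $(Cb)^{n/2}$ of $\gamma^u(\zeta_0)$. Here the orientation hypothesis enters through the fact, already used in defining $\hat\omega_n$, that $f^{-2}\zeta_0$ lies on an unstable side of $\omega_1^-$: this pins $\hat\omega_n$, and hence $p_n$, to an exponentially small neighbourhood of the vertex $\zeta_0$ of the parabola $f^{-1}\alpha_0^+\cap R$, and together with the quadratic tangency of that parabola with $\gamma^u(\zeta_0)$ at $\zeta_0$ and the $C^2(b)$-angle control of Sublemma \ref{angleint} this forces $p_n$ into the prescribed neighbourhood of $\gamma^u(\zeta_0)$. (That $A_n\neq\emptyset$ for large $n$ is in any case implicit from the approximating sequences in $\Omega$ quoted in the proof of Proposition \ref{estimate}.) Once $p_n\in A_n$ is available, the computation above completes the proof.
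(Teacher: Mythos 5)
Your strategy coincides with the paper's: for each large $n$, exhibit a periodic point $p_n$ of period $n$ lying in $A_n$, apply Proposition~\ref{estimate} to the atomic measure $\mu_n$ on its orbit, and send $n\to\infty$. Your explicit reduction $\lambda^u(\mu_n)=\frac1n\log\|D_{p_n}f^n|E^u_{p_n}\|$ via the $Df$-invariance of $E^u$, and your remark that no weak-$*$ limit should be extracted (since $\lambda^u$ is only upper, not lower, semicontinuous in the limit $\mu_n\to\delta_Q$), are both correct and genuinely clarify the paper's one-line computation. The one place you part company with the paper is in sourcing the periodic point: the paper simply cites \cite{Tak14}, which provides that $\bigcap_{k\in\mathbb Z}(f^n)^k\hat\omega_n$ is a singleton hyperbolic periodic point of period $n$ contained in $A_n$, whereas you sketch a graph-transform/cone construction and then try to argue $p_n\in A_n$ by hand. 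You are right that this membership is the delicate step, but your heuristic for it is not quite the right mechanism: what forces $p_n$ within $(Cb)^{n/2}$ of $\gamma^u(\zeta_0)$ is not the quadratic tangency of the parabola $f^{-1}\alpha_0^+\cap R$ with $\gamma^u(\zeta_0)$ (the stable parabola and the unstable curve play different roles here, and a bound of the form ``diameter of $\hat\omega_n$'' is not automatically of order $(Cb)^{n/2}$), but rather dynamical confinement: since $p_n$ is fixed by $f^n$ its backward orbit stays in $\hat\omega_n$, and the $O(b)$-per-iterate stable contraction pins it exponentially close to the unstable-direction core of $\hat\omega_n$, which the orientation hypothesis (through $f^{-2}\zeta_0$ lying on an unstable side of $\omega_1^-$) identifies with a neighbourhood of $\gamma^u(\zeta_0)$. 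This confinement estimate is precisely the content delegated to \cite{Tak14}; to be self-contained your sketch would need to supply it.
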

\begin{proof}
By the result in \cite{Tak14}, 
the subset $\bigcap_{k=-\infty}^\infty (f^n)^k\hat\omega_n$ of $A_n$ is a singleton which 
consists of a hyperbolic periodic point of period $n$.
Proposition \ref{estimate} gives an upper estimate of the unstable Lyapunov exponent of
the atomic probability measure on the orbit of this periodic point.
Since $n$ is arbitrary, this
yields the desired inequality. \end{proof}

%\begin{remark}
%{\rm The sequence of atomic measures constructed in the proof of Proposition \ref{estimate}
%converges weakly to $\delta_Q$. It follows that the unstable Lyapunov exponent 
%is not lower semi-continuous at $\delta_Q$.}
%\end{remark}

%Let $\zeta$ be a critical point, 
%Let $x$, $y$ be in admissible position relative to $\zeta$,
%$|x-y|\leq L^n$.
%$f^nx\in\Theta, f^ny\in\Theta$.
%Then

%$$\left|\log\frac{\|D_xf^n|E^u_x\|}{\|D_yf^n|E^u_y\|}\right|\leq L^{\frac{n}{2}}.$$

%Lemma \ref{angleint} gives $\|e^u_x-e^u_y\|\leq 2|x-y| \leq 2L^{n/4}$, and so
%$$\|D_xf^ne^u_x-D_yf^ne^u_y\|\leq \|D_xf^n-D_yf^n\|+\|D_yf^n(e^u_x-e^u_y)\|\leq 5^nL^n+5^n\cdot 2L^{\frac{n}{4}}.$$
%We also have
%$\|D_yf^n|E^u_y\|\geq e^{\frac{\lambda}{3}n}$.
%Hence
%$$\frac{\|D_xf^n|E^u_x\|}{\|D_yf^n|E^u_y\|}\leq 1+\frac{\|D_xf^ne^u_x-D_yf^ne^u_y\|}{\|D_yf^n|E^u_y\|}\leq 1+L^{\frac{n}{5}}.$$

\subsection{Lower estimate of the pressure}\label{abs}
The next lemma will be used to derive a contradiction in the proof of Theorem A.

\begin{lemma}\label{absence}
Assume $f$ preserves orientation.
If $(1/2)\lambda^u(\delta_Q)= \lambda_m^u$, then $P(t)>-t\lambda_m^u$
for any $t>0$.
\end{lemma}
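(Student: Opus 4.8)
The plan is to exhibit, for each $t>0$, a single invariant measure $\mu$ with $F_{\varphi_t}(\mu)>-t\lambda_m^u$, which forces $P(t)\geq F_{\varphi_t}(\mu)>-t\lambda_m^u$. The natural candidate is the periodic measure $\mu_n$ supported on the hyperbolic periodic orbit of period $n$ produced in the proof of Corollary \ref{estcor} (via \cite{Tak14}) inside $\hat\omega_n\subset A_n$. For this measure Proposition \ref{estimate} gives the upper bound $\lambda^u(\mu_n)\leq \tfrac12\lambda^u(\delta_Q)+O(1/n)=\lambda_m^u+O(1/n)$, using the hypothesis $\tfrac12\lambda^u(\delta_Q)=\lambda_m^u$. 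So $-t\lambda^u(\mu_n)\geq -t\lambda_m^u - tC/n$, and it remains to show the entropy term $h(\mu_n)$ is bounded below by a quantity that beats $tC/n$ — but for a single periodic orbit $h(\mu_n)=0$, so a single periodic measure is not enough on its own.

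The fix is to count: the point is that there are \emph{exponentially many} such periodic points. First I would count the combinatorial possibilities. The rectangle $\hat\omega_n=\omega_{n-3}^+\cap f^{-n+2}\omega_1^-$ was singled out, but one can vary the ``depth'' indices: for a range of choices of rectangles $\omega_{j}^{\pm}$ of comparable depth $\asymp n$ and compatible return combinatorics one obtains, by the same argument via \cite{Tak14}, a hyperbolic periodic point of period $\asymp n$, each lying in the corresponding $A$-set so that Proposition \ref{estimate} still yields $\lambda^u\leq \tfrac12\lambda^u(\delta_Q)+O(1/n)$. The number of such admissible itineraries grows like $e^{cn}$ for some $c>0$ coming from the branching of the horseshoe-type dynamics outside $I(\delta)$. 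I would then take $\mu$ to be the uniform average of the atomic measures over all these periodic orbits (all of the same period $N\asymp n$), so that $h(\mu)\geq \tfrac1N\log(\#\text{orbits})\geq c' >0$ bounded away from zero, while still $\lambda^u(\mu)\leq \lambda_m^u + O(1/n)$ since this bound holds orbit-by-orbit. Then $F_{\varphi_t}(\mu)=h(\mu)-t\lambda^u(\mu)\geq c' - t\lambda_m^u - tC/n > -t\lambda_m^u$ for $n$ large, giving the lemma.

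Alternatively, and perhaps more cleanly, instead of averaging one can build a genuine invariant subset: the union $\bigcup$ over admissible itineraries of the local pieces forms (after iteration) a horseshoe-like invariant set $\Lambda_n$ with $N\asymp n$ symbols on which $f^N$ is uniformly hyperbolic, carrying a measure of maximal entropy $\mu$ with $h(\mu)\geq \tfrac1N\log(\text{number of symbols})\geq c'>0$, and on which $\|D f^N|E^u\|\leq e^{\frac N2\lambda^u(\delta_Q)+O(1)}$ uniformly by Proposition \ref{estimate}. This again yields $\lambda^u(\mu)\leq\lambda_m^u+O(1/n)$ and hence $F_{\varphi_t}(\mu)>-t\lambda_m^u$ once $n$ is taken large relative to $t$.

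The main obstacle is the entropy lower bound: I must show that the construction of \cite{Tak14} / Corollary \ref{estcor} genuinely produces exponentially many distinct periodic orbits (in the period $N$) all satisfying the derivative upper bound of Proposition \ref{estimate}, uniformly. Concretely, this means verifying that the admissible combinatorial itineraries through the $\omega_j^{\pm}$ and their returns branch at a definite exponential rate and that each resulting orbit remains in an $A$-type set so the bound $\lambda^u\leq\tfrac12\lambda^u(\delta_Q)+O(1/N)$ applies. The control of $\lambda^u$ along each orbit is exactly Propositions \ref{binding} and \ref{estimate}; the only genuinely new bookkeeping is the counting, and I expect it to follow from the fact that outside $I(\delta)$ the dynamics is uniformly hyperbolic with positive topological entropy, so that imposing the finitely many constraints defining $\hat\omega_n$-type rectangles removes only a subexponential factor.
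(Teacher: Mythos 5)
Your overall strategy — build an invariant measure from many orbits inside $A_n$-type sets, control $\lambda^u$ orbit-by-orbit via Proposition \ref{estimate}, and let the entropy term beat the $O(1/n)$ excess in the Lyapunov exponent — is exactly the paper's strategy, and your ``cleaner'' second alternative (an induced horseshoe carrying the measure of maximal entropy) is essentially the paper's construction. However, two of your quantitative claims are wrong. First, the averaging construction fails outright: if $\mu$ is a finite convex combination of periodic (ergodic) measures, then by affinity of entropy $h(\mu)$ equals the corresponding convex combination of their entropies, each of which is $0$, so $h(\mu)=0$; the bound ``$h(\mu)\geq\frac{1}{N}\log(\#\text{orbits})$'' only applies to a measure of maximal entropy on a bona fide subshift, not to an average of Dirac masses. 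Second, and more seriously, the claim of $e^{cn}$ admissible itineraries, and hence $h(\mu)\geq c'>0$ uniformly, cannot be right. A point of $A_n$ has a forward orbit that is pinned near $Q$ for $n-O(1)$ consecutive iterates (this is exactly what forces $p(\zeta_0,x)\approx n$ in Proposition \ref{estimate}), leaving only a bounded number of iterates where any branching could occur; so the number of such period-$\asymp n$ orbits is at most polynomial in $n$. Equivalently, any weak limit of these periodic measures is $\delta_Q$, and by upper semi-continuity of entropy their entropies tend to $0$, contradicting ``bounded away from zero.''

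The point that saves the argument — and that the paper actually uses — is that polynomial counting is already enough. Using roughly $\sqrt{q}$ rectangles $\hat\omega_n$ with $n\in\{q-\sqrt{q}+1,\dots,q\}$ as symbols of an induced full shift (with return times $\asymp q$), the measure of maximal entropy of the induced system produces an $f$-invariant measure with $h\gtrsim\frac{\log\sqrt{q}}{q}$ and $\lambda^u\leq\lambda_m^u+\frac{C}{q}$. Although $\frac{\log\sqrt q}{q}\to0$, it dominates $\frac{Ct}{q}$ once $q>e^{2Ct}$, which gives $F_{\varphi_t}>-t\lambda_m^u$ for that $t$. So: keep your second alternative, discard the first, and replace ``exponentially many itineraries, entropy bounded below by $c'>0$'' with the correct polynomial count and the $\frac{\log n}{n}$ versus $\frac{t}{n}$ comparison; the dependence of the period on $t$ is genuinely necessary.
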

\begin{proof}
We adapt the construction of Leplaideur \cite{Lep11} which was inspired by Makarov and Smirnov \cite{MakSmi03}.
The idea is to construct a uniformly hyperbolic subset which supports an invariant measure whose minus of the free energy 
is slightly bigger than $-t\lambda_m^u$. 

Let $q>0$ be a square of a large integer. We use the rectangles $\hat \omega_n$ $(n=q-\sqrt{q}+1,\ldots,q)$
to construct an induced system. Set $r_n=q-\sqrt{q}+n$ $(n=1,\ldots,\sqrt{q})$.
Endow $\Sigma_{\sqrt{q}}=\{\underline{a}=\{a_i\}_{i\in\mathbb Z}\colon a_i\in\{1,\ldots,\sqrt{q}\}\}$ with the product topology 
of the discrete topology.
Define $\pi\colon\Sigma_{\sqrt{q}}\to\Omega$ by
$\pi(\underline{a})=x$, where
 $$\omega^s_k=\hat\omega_{a_0}\cap\left(\bigcap_{i=1}^k 
f^{-r_{a_0}}\circ\cdots\circ f^{-r_{a_{i-1}}}\hat\omega_{a_i}\right)\ \text{ and }\
\omega^u_k=\bigcap_{i=1}^{k}f^{r_{a_{-1}}}\circ\cdots\circ f^{r_{a_{-i}}}\hat\omega_{a_{-i}},$$
and
$$\{x\}=\left(\bigcap_{k=1}^{\infty}\omega_k^s\right)
\cap\left(\bigcap_{k=1}^{\infty}\omega_k^u\right).$$ 
By \cite{Tak14},
$\pi$ is well-defined, continuous, injective.

Let $\sigma\colon\Sigma_{\sqrt{q}}\circlearrowleft$ denote the left shift.
For a $\sigma^q$-invariant Borel probability measure $\mu$,
define a measure $\mathcal L(\mu)$ by
$$\mathcal L(\mu)=\sum_{[a_0,a_1,\ldots,a_{q-1}]}\sum_{i=0}^{r_{a_0}+r_{a_1}+\cdots+r_{a_{q-1}}-1} f_*^i(\pi_*(\mu |_{[a_0,a_1,\ldots,a_{q-1}]})),$$
where $[a_0,a_1,\ldots,a_{q-1}]=\{\underline{b}\in\Sigma_{\sqrt{q}}\colon b_i=a_i\ \ i=0,1,\ldots,q-1\}.$
Then $\mathcal L(\mu)$ is a probability and $f$-invariant. 
Define $r\colon \Sigma_{\sqrt{q}}\to\mathbb R$ by
$r(\underline{a})=\sum_{i=0}^{q-1}r_{a_i}$, and
 $\Phi_t\colon\Sigma_{\sqrt{q}}\to\mathbb R$ by 
$$\Phi_t(\underline{a})=\sum_{i=0}^{r(\underline{a})-1}\varphi_t(f^i(\pi(\underline{a}))).$$
Set $P_n=\{\underline{a}\in\Sigma_{\sqrt{q}}\colon \sigma^{qn}(\underline{a})=\underline{a}\}.$ 
By Proposition \ref{estimate} and the assumption $(1/2)\lambda^u(\delta_Q)= \lambda_m^u$,
for each $\underline{a}\in P_n$ we have
$$\frac{\Phi_t(\underline{a})}{r(\underline{a})}\geq 
-\frac{r_{a_0}Ct+(r_{a_1}+\cdots+r_{a_{q-1}}-1)t\lambda_m^u}{r_{a_1}+\cdots+r_{a_{q-1}}-1}
\geq -\frac{Ct}{q} -t\lambda_m^u.$$
Let $\mu_0$ denote the measure of maximal entropy of $\sigma^q$.
Since $r$ and $\Phi_t$ are continuous,
as $n\to\infty$ we have
$$\frac{1}{\#P_n}\sum_{\underline{a}\in P_n} r(\underline{a})\to \int r d\mu_0\ \text{ and }\
 \frac{1}{\#P_n}\sum_{\underline{a}\in P_n} \Phi_t(\underline{a})\to\int \Phi_t d\mu_0.$$
 Hence
$$\frac{\int\Phi_td\mu_0}{\int r d\mu_0} \geq -\frac{Ct}{q} -t\lambda_m^u.$$
Since the entropy of $\mu_0$ is $q\log\sqrt{q}$ and $\int r d\mu_0\leq q^2$,
we obtain
\begin{align*}
h(\mathcal L(\mu_0))-t\int \log J^u d\mathcal L(\mu_0)&=\frac{1}{\int r d\mu_0}
\left(q\log\sqrt{q}+\int \Phi_td\mu_0\right)\\
&\geq \frac{1}{q}\log\sqrt{q}-t\frac{C}{q}-t\lambda_m^u>-t\lambda_m^u.
\end{align*}
The last inequality holds for sufficiently large $q$.
\end{proof}

\section{Proofs of the theorems}
In this section we put together the results in Sect.2 and prove the theorems.

\subsection{Existence of equilibrium measures}\label{equilibrium}
We prove Theorem A.

\medskip

\noindent{\it Proof of Theorem A.} Let $t>0$.
Corollary \ref{estcor} gives
\begin{equation}\label{equi2}P(t)\geq -t\lambda^u_m\geq-(t/2)\lambda^u(\delta_Q).\end{equation}
By the linearity of entropy and unstable Lyapunov exponent on measures,
one can choose a sequence $\{\mu_n\}_n$ in $\mathcal M^e(f)$ such that $F_{\varphi_t}(\mu_n)\to P(t)$. Choosing a subsequence if necessary we may assume 
$\mu_n\to\mu\in\mathcal M(f)$. 
Write $\mu= u\delta_Q+(1-u)\nu$, $0\leq u\leq1$, $\nu\{Q\}=0$.
From the upper semi-continuity of entropy \cite[Corollary 3.2]{SenTak1} and \eqref{lyeq1},
\begin{align}\label{equi1}
P(t)=\lim_{n\to\infty}F_{\varphi_t}(\mu_n)&\leq \limsup_{n\to\infty}h(\mu_n)-t\liminf_{n\to\infty}\lambda^u(\mu_n)\\
&\leq h(\mu)-t\left(\frac{u}{2}\lambda^u(\delta_Q)+(1-u)\lambda^u(\nu)\right)\notag\\
&=(1-u)F_{\varphi_t}(\nu)-\frac{tu}{2}\lambda^u(\delta_Q).\notag\end{align}
For the last equality we have used
$h(\mu)=(1-u)h(\nu)$. Plugging
$-(tu/2)\lambda^u(\delta_Q)\leq uP(t)$ from \eqref{equi2} into \eqref{equi1} we obtain
\begin{equation}\label{equi3}P(t)\leq(1-u)F_{\varphi_t}(\nu)+uP(t).\end{equation}
If $u\neq1$, then rearranging \eqref{equi3} yields $P(t)\leq F_{\varphi_t}(\nu)$. Namely $\nu$ is an equilibrium measure
for $\varphi_t$.
If $u=1$, then 
  \eqref{equi1} and Corollary \ref{estcor} yield $P(t)\leq -(t/2)\lambda^u(\delta_Q)\leq -t\lambda_m^u$.
From this and \eqref{equi2} we obtain
$P(t)= -t\lambda^u_m=-(t/2)\lambda^u(\delta_Q)$, a contradiction to Lemma \ref{absence}.
\qed

\subsection{Accumulation points of equilibrium measures as $t\to+\infty$}\label{zerot}
We now prove Theorem B.
\medskip

\noindent{\it Proof of Theorem B.}
Let $\{\mu_{t_n}\}_{n=0}^\infty$ be a sequence in $\mathcal M(f)$ such that 
$t_n\nearrow+\infty$, 
$\mu_{t_n}$ is an ergodic equilibrium measure for $\varphi_{t_n}$
and $\mu_{t_n}\to\mu$ as $n\to\infty$.
Write
$\mu= u\delta_Q+(1-u)\nu$, $0\leq u\leq1$, $\nu\{Q\}=0$.
Proposition \ref{ly} gives
\begin{equation}\label{eqzero}
\lambda_m^u=\lim_{n\to\infty}\lambda^u(\mu_{t_n})\geq\frac{u}{2}\lambda^u(\delta_Q)+(1-u)\lambda^u(\nu).\end{equation}

By Corollary \ref{estcor}, $(1/2)\lambda^u(\delta_Q)\geq \lambda_m^u$.
In this paragraph we treat the case $(1/2)\lambda^u(\delta_Q)>\lambda_m^u$.
If $u=1$, then \eqref{eqzero} gives $\lambda_m^u\geq (1/2)\lambda^u(\delta_Q)$, a contradiction.
Hence $u\neq1$.
Plugging $(u/2)\lambda^u(\delta_Q)\geq u\lambda_m^u$ into \eqref{eqzero} and then rearranging the result yield
$(1-u)\lambda_m^u\geq (1-u)\lambda^u(\nu)$, and thus
 $\lambda_m^u\geq\lambda^u(\nu)$.
 If $u\neq0$, all these three inequalities are strict and we reach
  a contradiction. 
Hence $u=0$ and so $\mu=\nu$.
Lemma \ref{44} gives $\lambda^u(\mu)=\displaystyle{\lim_{n\to\infty}}\lambda^u(\mu_{t_n})=\lambda_m^u$.

In the case $(1/2)\lambda^u(\delta_Q)=\lambda_m^u$,
\eqref{eqzero} gives $(1-u)\lambda_m^u\geq(1-u)\lambda^u(\nu)$.
If $u\neq1$, then $\nu$ is Lyapunov minimizing.
If $u=1$, then $\mu=\delta_Q$. \qed

\subsection{Entropy criterion}\label{criterion}
We now prove Theorem C.
\medskip

\noindent{\it Proof of Theorem C.}
Let $\{\mu_{t_n}\}_{n=0}^\infty$ and $\mu$ be the same as in the proof of Theorem B.
Assume
$(1/2)\lambda^u(\delta_Q)\neq\lambda_m^u$.
\begin{lemma}\label{claim2}
For any $\varepsilon>0$ there exists $N>0$ such that for all $n\geq N$,
$$P(t_n)<h(\mu)-t_n\lambda^u(\mu)+\varepsilon.$$
\end{lemma}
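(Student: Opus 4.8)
The plan is to establish Lemma~\ref{claim2} as the key ingredient for proving that accumulation points select the entropy-maximizing Lyapunov minimizer. First I would recall from the proof of Theorem~B that we are in the case $u=0$, so $\mu=\nu$ with $\nu\{Q\}=0$ and $\lambda^u(\mu)=\lambda_m^u$. Since $\mu_{t_n}$ is an equilibrium measure for $\varphi_{t_n}$, we have $P(t_n)=h(\mu_{t_n})-t_n\lambda^u(\mu_{t_n})$. The strategy is to control both terms: the entropy term via upper semi-continuity, and the Lyapunov term via Lemma~\ref{44}.

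The key steps are as follows. From Lemma~\ref{44}, since $\mu_{t_n}\to\mu$ and $\mu\{Q\}=0$, we have $\lambda^u(\mu_{t_n})\to\lambda^u(\mu)$; hence for $n$ large, $\lambda^u(\mu_{t_n})\geq\lambda^u(\mu)-\varepsilon/(2t_0)$ for a suitable fixed $t_0$ — but actually since $t_n\to\infty$, a cleaner bound is needed. The sharper observation is that $\lambda^u(\mu_{t_n})\geq \lambda^u(\mu)-o(1)$, and since $P(t_n)/t_n\to-\lambda_m^u=-\lambda^u(\mu)$ (this last from \eqref{equi2} together with the fact proved in Theorem~B that accumulation points are Lyapunov minimizing, or directly: $P(t_n)\geq -t_n\lambda_m^u$ and $P(t_n)=h(\mu_{t_n})-t_n\lambda^u(\mu_{t_n})\leq h_{\mathrm{top}}-t_n\lambda^u(\mu_{t_n})$), one controls $h(\mu_{t_n})=P(t_n)+t_n\lambda^u(\mu_{t_n})$. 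Combining with the upper semi-continuity of entropy \cite[Corollary 3.2]{SenTak1}, which gives $\limsup_n h(\mu_{t_n})\leq h(\mu)$, I would derive that for $n\geq N$,
\begin{equation*}
P(t_n)=h(\mu_{t_n})-t_n\lambda^u(\mu_{t_n})\leq h(\mu)+\varepsilon/2 -t_n(\lambda^u(\mu)-\varepsilon/(2t_n))\leq h(\mu)-t_n\lambda^u(\mu)+\varepsilon,
\end{equation*}
using $t_n\lambda^u(\mu_{t_n})\geq t_n\lambda^u(\mu)-\varepsilon/2$ for $n$ large (valid because $t_n|\lambda^u(\mu_{t_n})-\lambda^u(\mu)|$ need not go to zero — so this must be handled with care: one needs a rate of convergence in Lemma~\ref{44}, or one absorbs this into the entropy bound differently).

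The main obstacle I anticipate is precisely controlling $t_n(\lambda^u(\mu_{t_n})-\lambda^u(\mu))$, since Lemma~\ref{44} gives only convergence without a rate, while $t_n\to\infty$. The resolution is likely to avoid this product entirely: instead of bounding the two factors separately, note that by definition of equilibrium measure and the variational principle, $P(t_n)\leq h(\mu_{t_n})-t_n\lambda^u(\mu_{t_n})$ is an equality, and compare against the trial measure $\mu$ itself or against measures realizing the entropy-maximizing value. The cleanest route is: $P(t_n)=\sup_\nu(h(\nu)-t_n\lambda^u(\nu))$, and since $\lambda^u$ is bounded below by $\lambda_m^u$ and $h$ is bounded above, and $\mu_{t_n}\to\mu$ with the entropy upper semi-continuous, one extracts $h(\mu)\geq\limsup h(\mu_{t_n})$ and then writes $P(t_n)-(h(\mu)-t_n\lambda^u(\mu))\leq (h(\mu_{t_n})-h(\mu)) - t_n(\lambda^u(\mu_{t_n})-\lambda^u(\mu))$, and uses that the left factor is eventually $<\varepsilon/2$ while the right factor is $\leq 0$ eventually because $\lambda^u(\mu_{t_n})\geq\lambda^u(\mu)$ for large $n$ — but this last inequality fails in general. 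Hence I expect the actual argument uses the specific structure: for large $n$, $\lambda^u(\mu_{t_n})$ is close to $\lambda_m^u$ from \emph{above or below}, and one must invoke that $\mu_{t_n}$ being an equilibrium measure forces $\lambda^u(\mu_{t_n})\to\lambda_m^u$ monotonically in an appropriate sense, or uses Proposition~\ref{ly}\eqref{lyeq2} to sandwich. This delicate point — transferring weak-$*$ convergence of measures into a quantitative bound on free energies at the rate dictated by $t_n$ — is the crux, and I would resolve it by exploiting that $P$ is convex (hence differentiable a.e.) with $P'(t)\to-\lambda_m^u$, so $P(t_n)+t_n\lambda_m^u$ is bounded, combined with $\lambda^u(\mu)=\lambda_m^u$.
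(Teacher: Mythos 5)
You identify the right decomposition
\[
P(t_n)-\bigl(h(\mu)-t_n\lambda^u(\mu)\bigr)=\bigl(h(\mu_{t_n})-h(\mu)\bigr)-t_n\bigl(\lambda^u(\mu_{t_n})-\lambda^u(\mu)\bigr),
\]
and you correctly note that the first term is eventually $<\varepsilon$ by upper semi-continuity of entropy. But you then stop short, claiming that the inequality $\lambda^u(\mu_{t_n})\geq\lambda^u(\mu)$ ``fails in general,'' and you propose detours (a quantitative rate in Lemma~\ref{44}, convexity of $P$ and $P'(t)\to-\lambda_m^u$) that you do not carry out. This is the gap: the inequality you doubt is in fact automatic. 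By Theorem~B, under the standing hypothesis $(1/2)\lambda^u(\delta_Q)\neq\lambda_m^u$, the accumulation point $\mu$ is Lyapunov minimizing, i.e.\ $\lambda^u(\mu)=\lambda_m^u=\inf\{\lambda^u(\nu)\colon\nu\in\mathcal M(f)\}$. Hence $\lambda^u(\mu_{t_n})\geq\lambda_m^u=\lambda^u(\mu)$ for \emph{every} $n$, not merely eventually, and with no rate needed. Consequently $-t_n(\lambda^u(\mu_{t_n})-\lambda^u(\mu))\leq 0$ whenever $t_n>0$, and your decomposition immediately closes the argument: for $n$ large, $h(\mu_{t_n})<h(\mu)+\varepsilon$ and therefore $P(t_n)=h(\mu_{t_n})-t_n\lambda^u(\mu_{t_n})<h(\mu)-t_n\lambda^u(\mu)+\varepsilon$. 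This is precisely the paper's proof, run as a contradiction; the two ingredients are upper semi-continuity of entropy and the pointwise inequality $\lambda^u(\mu)\leq\lambda^u(\mu_{t_n})$ coming from Lyapunov minimality. The product $t_n(\lambda^u(\mu_{t_n})-\lambda^u(\mu))$ does not need to be controlled in magnitude; only its sign matters, and the sign is forced by minimality.
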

\begin{proof}
Suppose the statement is false.
Then, there exists $\varepsilon>0$ such that
$P(t_n)\geq h(\mu)-t_n\lambda^u(\mu)+\varepsilon$
holds for infinitely many $n$.
For these $n$ we have 
$$P(t_n)=h(\mu_{t_n})-t_n\lambda^u(\mu_{t_n})\geq h(\mu)-t_n\lambda^u(\mu)+\varepsilon.$$
Since $\mu$ is Lyapunov minimizing by Theorem B,
 $\lambda^u(\mu)\leq\lambda^u(\mu_{t_n})$. Using this and
 the upper semi-continuity of entropy, for sufficiently large $n$ we have
$$h(\mu)+\frac{\varepsilon}{2}-t_n\lambda^u(\mu)>h(\mu_{t_n})-t_n\lambda^u(\mu_{t_n}).$$
These two inequalities yield a contradiction.
\end{proof}

Suppose there exists a Lyapunov minimizing measure $\nu$ such that $h(\mu)<h(\nu)$.
Let $\varepsilon>0$ be such that $h(\mu)+\varepsilon<h(\nu)$.
By Lemma \ref{claim2}, for sufficiently large $n$ we have 
 $$P(t_n)<h(\mu)-t_n\lambda^u(\mu)+\varepsilon<h(\nu)-t_n\lambda^u(\mu)=h(\nu)-t_n\lambda^u(\nu),$$
 a contradiction. 
\qed

\subsection{Convergence point of equilibrium measures as $t\to-\infty$}
We now prove Theorem D.
\medskip

\noindent{\it Proof of Theorem D.}
The lemma below shows that $\delta_Q$ is the unique measure which maximizes the unstable Lyapunov exponent.
\begin{lemma}\label{maximize}
For any $\mu\in\mathcal M(f)\setminus\{\delta_Q\}$, $\lambda^u(\mu)<\lambda^u(\delta_Q)$.
\end{lemma}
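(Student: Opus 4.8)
The plan is to show that any $\mu\in\mathcal M(f)$ with $\lambda^u(\mu)=\lambda^u(\delta_Q)$ must equal $\delta_Q$, which — combined with the easy upper bound $\lambda^u(\mu)\le\lambda^u(\delta_Q)$ — gives the strict inequality for every other measure. First I would recall the structure of $\Omega$: outside the critical region $I(\delta)$ the dynamics is uniformly hyperbolic, and inside $I(\delta)$ the binding argument (Proposition \ref{binding}) applies. The key quantitative input is that $\log J^u$ is controlled by $\log J^u(Q)=\lambda^u(\delta_Q)$ away from $Q$ only with a \emph{strict} deficit: near $Q$ the derivative in the unstable direction is close to the eigenvalue of $Df$ at $Q$, i.e. to $e^{\lambda^u(\delta_Q)}$, while everywhere else on $\Omega$ — and in particular on any orbit segment that returns to $I(\delta)$ and undergoes a bound period — the average of $\log J^u$ is bounded \emph{strictly below} $\lambda^u(\delta_Q)$, because the folding near $\zeta_0$ costs an irrecoverable factor (Proposition \ref{binding}(b) gives growth at rate only $\tfrac12\lambda^u(\delta_Q)$ per bound period).

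The main step is then a dichotomy for an ergodic $\mu$. Either $\mu=\delta_Q$, or $\mu(\{Q\})<1$, so by ergodicity $\mu(\{Q\})=0$ and $\mu$-a.e. orbit spends a definite positive fraction of its time in $\Omega\setminus U$ (some fixed neighbourhood of $Q$), or else returns to $I(\delta)$ infinitely often with positive frequency. In the first of these sub-cases, on the part of the orbit outside the neighbourhood of $Q$ one has $\log J^u\le\lambda^u(\delta_Q)-c$ for a uniform $c>0$ (by compactness and continuity of $J^u$ on $\Omega\setminus\{Q\}$, noting $J^u(Q)$ is the unique place the value $\lambda^u(\delta_Q)$ is attained — here I'd use that $Q$ is the $x$-coordinate extremal fixed point and the Jacobian bound from \eqref{henon}), so integrating gives $\lambda^u(\mu)=\int\log J^u\,d\mu<\lambda^u(\delta_Q)$. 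In the second sub-case one invokes Proposition \ref{binding}(b): each visit to $I(\delta)$ initiates a bound period of length $p$ over which the unstable derivative grows only like $e^{(p/2)\lambda^u(\delta_Q)}$, a loss of a factor $e^{(p/2)\lambda^u(\delta_Q)}$ relative to staying at $Q$; since these bound periods occupy a positive fraction of time ($\mu$-a.e.), Birkhoff's theorem yields $\lambda^u(\mu)\le\lambda^u(\delta_Q)-c'<\lambda^u(\delta_Q)$ for uniform $c'>0$. For a general (non-ergodic) $\mu\ne\delta_Q$ one passes to the ergodic decomposition: $\lambda^u$ is affine, $\delta_Q$ is an extreme point of $\mathcal M(f)$, so if $\mu\ne\delta_Q$ its ergodic components are not all $\delta_Q$, and the bound follows.

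The step I expect to be the main obstacle is making the deficit $c>0$ genuinely uniform — i.e. ruling out that a measure could have $\lambda^u(\mu)$ approach $\lambda^u(\delta_Q)$ while $\mu\ne\delta_Q$, which is exactly the subtlety that forces the argument to be about a strict inequality for \emph{each} $\mu$ rather than a supremum statement. Concretely, the difficulty is that an orbit can spend an overwhelming fraction of its time arbitrarily close to $Q$ (accumulating derivative very close to $e^{\lambda^u(\delta_Q)}$ per step there), so one cannot get a single constant $c$ working for all measures; instead the honest statement is pointwise in $\mu$, and the argument must show that \emph{any} positive-frequency excursion away from $Q$ (either into $\Omega\setminus U$ and back, which is forced by invariance and $Q\notin\mathrm{supp}\,\mu$ being impossible to avoid since $W^u(Q)$ leaves every neighbourhood of $Q$, or into $I(\delta)$) produces a loss that, although possibly small, is strictly positive for that particular $\mu$. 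Handling the transition regions near $Q$ — where $J^u$ is close to but not equal to $e^{\lambda^u(\delta_Q)}$ — so that the net contribution of a full excursion is strictly negative, is the technical heart; I would control it using the hyperbolic estimates at $Q$ (the linearizing coordinates and the bound $\|Df e^s\|\le Cb$ from (F3)) together with the binding estimates of Proposition \ref{binding} to bound the derivative gained during one complete return loop.
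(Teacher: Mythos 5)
Your Sub-case 2 (positive-frequency returns to $I(\delta)$) and your ergodic-decomposition reduction are essentially the paper's argument. The paper takes a Birkhoff-generic point, decomposes its orbit into bound periods initiated by returns to $I(\delta)$ together with free segments, uses Proposition~\ref{binding}(b) to cap each bound period's derivative growth by $Ce^{\frac{p}{2}\lambda^u(\delta_Q)}$, and then uses the lower bound $p\geq -C\log\delta$ to convert this into a per-return multiplicative deficit $\delta^C$, which via Birkhoff gives $\lambda^u(\mu)\leq\lambda^u(\delta_Q)+C\log\delta\cdot\mu(I(\delta))<\lambda^u(\delta_Q)$. So that half of your outline is on the right track.

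The genuine gap is in your Sub-case 1, i.e.\ the case ${\rm supp}(\mu)\cap I(\delta)=\emptyset$. You assert a uniform bound $\log J^u\leq\lambda^u(\delta_Q)-c$ on $\Omega\setminus U$ for a neighbourhood $U$ of $Q$, ``noting $J^u(Q)$ is the unique place the value $\lambda^u(\delta_Q)$ is attained.'' That uniqueness claim is false. In the $b\to 0$ limit $g(x)=1-ax^2$ the derivative $|g'(x)|=2a|x|$ is maximal both at the fixed point $q\approx -1$ and at its preimage $-q\approx +1$, and the two-dimensional analogue of the latter is the stable side $\alpha_0^+$ of $R$ (which contains the heteroclinic/homoclinic point $f\zeta_0\in\Omega$). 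Points of $\Omega$ near $\alpha_0^+$ with nearly horizontal $E^u$ therefore have $J^u$ not bounded away from $e^{\lambda^u(\delta_Q)}$, while lying far from $Q$; compactness and continuity of $J^u$ on $\Omega\setminus U$ simply do not yield the uniform $c>0$ you need. The paper handles this case by a \emph{geometric} argument that your outline lacks: there is a fixed neighbourhood $W$ of $Q$ which any ergodic $\mu\neq\delta_Q$ with ${\rm supp}(\mu)\cap I(\delta)=\emptyset$ must avoid entirely, because an orbit of $\Omega$ that enters $W$ and later escapes is carried out along the local unstable manifold of $Q$ and is thereby forced into $I(\delta)$, a contradiction. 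Only after ${\rm supp}(\mu)\cap W=\emptyset$ is known (and, by $f$-invariance together with (R4), that ${\rm supp}(\mu)$ also stays away from $\alpha_0^+$) is $J^u$ uniformly bounded below $e^{\lambda^u(\delta_Q)}$ on ${\rm supp}(\mu)$. Without a step of this kind your Sub-case 1 does not close.
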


\begin{proof}
From the ergodic decomposition theorem, we only have to consider ergodic measures.

One can choose a neighborhood $W$ of $Q$ such that for any ergodic 
$\mu\in\mathcal M(f)\setminus\{\delta_Q\}$ with ${\rm supp}(\mu)\cap I(\delta)=\emptyset$,
${\rm supp}(\mu)\cap W=\emptyset$ holds.
Hence, for such $\mu$, $\lambda^u(\mu)<\lambda^u(\delta_Q)$ holds.

It is left to consider the case ${\rm supp(\mu)}\cap I(\delta)\neq\emptyset$.
Then $\mu(I(\delta))>0$, and from the Ergodic Theorem, it is possible to
take a point $x\in\Omega$ such that $$\displaystyle{\lim_{n\to\infty}}\frac{1}{n}\displaystyle{\sum_{i=0}^{n-1}}
\log \|D_{f^ix}f|E^u_{f^ix}\|=\lambda^u(\mu),$$
and
$$\lim_{n\to\infty}\frac{1}{n}\#\{0\leq i\leq n-1\colon f^ix\in I(\delta)\}=\mu(I(\delta)).$$
Let $0\leq n_1<n_2<\cdots$ be the sequence of integers such that
$f^{n_k}x\in I(\delta)$ and $f^{n_k+i}x\notin I(\delta)$ for every $1\leq i\leq n_{k+1}-n_k-1$.
Let $p_k$ denote the bound period for $f^{n_k}x$.
Then
$$\|D_{f^{n_k}x}f^{p_k}|E^u_{f^{n_k}x}\|\leq Ce^{\frac{\lambda^u(\delta_Q)}{2}p_k}.$$
For iterates out of $I(\delta)$,
$$\|D_{f^{n_k+p_k}x}f^{n_{k+1}-n_k-p_k}|E^u_{f^{n_k+p_k}x}\|\leq Ce^{\lambda^u(\delta_Q)(n_{k+1}-n_k-p_k)}.$$
Since $p_k\geq -C\log\delta$ by \cite{SenTak1},
$Ce^{-\frac{\lambda^u(\delta_Q)}{2}p_k}\leq \delta^C$. Hence
$$\|D_{f^{n_k}x}f^{n_{k+1}-n_k}|E^u_{f^{n_k}x}\|\leq Ce^{-\frac{\lambda^u(\delta_Q)}{2}p_k}
\cdot e^{\lambda^u(\delta_Q)(n_{k+1}-n_k)}\leq \delta^Ce^{\lambda^u(\delta_Q)(n_{k+1}-n_k)}.$$
This yields
\begin{align*}\lim_{k\to\infty }\frac{1}{n_k}\sum_{i=0}^{n_k-1}\log \|D_{f^ix}f|E^u_{f^ix}\|&\leq \lambda^u(\delta_Q)+C\log\delta\cdot
\lim_{k\to\infty}\frac{1}{n_k}\#\{0\leq i\leq n_k-1\colon f^ix\in I(\delta)\}\\
&= \lambda^u(\delta_Q)+C\log\delta\cdot\mu(I(\delta))<\lambda^u(\delta_Q).\qedhere\end{align*}
\end{proof}

Let $\{\mu_{t_n}\}_{n=0}^\infty$ be a sequence in $\mathcal M(f)$ such that 
$t_n\searrow-\infty$, 
$\mu_{t_n}$ is an ergodic equilibrium measure for $\varphi_{t_n}$
and $\mu_{t_n}\to\mu$ as $n\to\infty$.
We have $\sup\{\lambda^u(\mu)\colon\mu\in\mathcal M(f)\}=\displaystyle{\limsup_{n\to\infty}}\lambda^u(\mu_{t_n})$,
and the upper semi-continuity of the unstable Lyapunov exponent in \eqref{lyeq2}
gives $\displaystyle{\limsup_{n\to\infty}}\lambda^u(\mu_{t_n})\leq\lambda^u(\mu)$.
By Lemma \ref{maximize}, $\mu=\delta_Q$.
Hence Theorem D holds.
\qed

\subsection*{Acknowledgments}
Partially supported by the Grant-in-Aid for Young Scientists (B) of the JSPS, Grant No.23740121.

\bibliographystyle{amsplain}

\end{document}